\newtheorem{thm}{Theorem}[section] 
\newtheorem{lem}[thm]{Lemma} 
\newtheorem{prop}[thm]{Proposition} 
\newtheorem{defn}{Definition}[section]
\theoremstyle{definition} 
\theoremstyle{remark}
\theoremstyle{definition}
\def\O{\Omega}
\def\S{\Sigma} 
\def\n{\nabla}
\def\p{\partial}
\def\a{\alpha}
\def\b{\beta}
\def\n{\nabla}
\def\O{\Omega}
\def\p{\partial}
\def\a{\alpha}
\def\b{\beta}
\def\g{\gamma}
\def\d{\delta}
\def\k{\kappa}
\def\l{\lambda}
\def\s{\sigma}
\def\ov{\overline}
\def\n{\nabla}
\def\<{\langle}
\def\>{\rangle}
\def\n{\nabla}
\def\NN{\mathbb{N}}
\def\RR{\mathbb{R}}
\def\SS{\mathbb{S}}
\def\O{\Omega}
\def\p{\partial}
\def\a{\alpha}
\def\b{\beta}
\def\g{\gamma}
\def\d{\delta}
\def\l{\lambda}
\def\s{\sigma}
\def\ov{\overline}
\def\wh{\widehat}
\def\R{\mathbb{R}}
\def\C{\mathcal{C}}
\def\C{\mathcal{C}}
\def\ol{\overline}
\def\wt{\widetilde}
\patchcmd{\abstract}{\scshape\abstractname}{\textbf{\abstractname}}{}{}
\def\@makefnmark{} 
\numberwithin{equation}{section}
\numberwithin{exa}{section}
\begin{document}
\title[Convex capillary hypersurfaces of prescribed curvature problem] {Convex capillary hypersurfaces of prescribed curvature problem}

\author[X. Mei]{Xinqun Mei}
\address[X. Mei]{Key Laboratory of Pure and Applied Mathematics, School of Mathematical Sciences, Peking University,  Beijing, 100871, P.R. China}

\email{\href{qunmath@pku.edu.cn}{qunmath@pku.edu.cn}}

\author[G. Wang]{Guofang Wang}
\address[G. Wang]{Mathematisches Institut, Albert-Ludwigs-Universit\"{a}t Freiburg, Freiburg im Breisgau, 79104, Germany}
\email{\href{guofang.wang@math.uni-freiburg.de}{guofang.wang@math.uni-freiburg.de}}

\author[L. Weng]{Liangjun Weng}
\address[L. Weng]{Centro di Ricerca Matematica Ennio De Giorgi, Scuola Normale Superiore, Pisa, 56126, Italy \& Dipartimento di Matematica, Universit\`a di Pisa, Pisa, 56127, Italy}
\email{\href{mailto:liangjun.weng@sns.it}  {liangjun.weng@sns.it}}

\subjclass[2020]{Primary: 53C21, 35B65 Secondary: 35J60, 53C42}

\keywords{Capillary hypersurfaces; Weingarten curvature; Hessian quotient equation; Robin boundary condition; degree theory}

\begin{abstract}
In this paper, we study the prescribed $k$-th Weingarten curvature problem for convex capillary hypersurfaces in $\overline{\mathbb{R}^{n+1}_{+}}$. This problem naturally extends the prescribed $k$-th Weingarten curvature problem for closed convex hypersurfaces, previously investigated by Guan-Guan in \cite{GG02}, to the capillary setting. We reformulate the problem as the solvability of a Hessian quotient equation with a Robin boundary condition on a spherical cap. Under a natural sufficient condition, we establish the existence of a strictly convex capillary hypersurface with the prescribed $k$-th Weingarten curvature. This also extends our recent work on the capillary Minkowski problem in \cite{MWW-AIM}.  
\end{abstract}

\maketitle

\section{Introduction}

\subsection{Setup and motivation}
  Let $\RR^{n+1}_+=\{x\in \R^{n+1}\,|\, x_{n+1}>0\}$ be the upper half-space, and $\S$  a properly embedded, smooth compact hypersurface with boundary in $\overline{\RR^{n+1}_+}$, such that $$\text{int}(\S)\subset \RR^{n+1}_{+} ~~\text{and}~~ \p \Sigma\subset \p \RR^{n+1}_{+}.$$ $\S\subset \ov{\RR^{n+1}_+}$  is called a {\it capillary hypersurface} if $\S$ intersects $\p\RR^{n+1}_+$ at a constant contact angle $\theta\in (0, \pi)$ along $\p\S$. In particular, if $\theta=\frac\pi 2$, $\S$ is known as  \textit{free boundary hypersurface}. Denote $\nu$ as the unit outward normal of $\S$ in $\ol{\RR^{n+1}_+}$, 
  the contact angle $\theta $ is defined by $$ \cos (\pi -\theta)=\langle \nu, e \rangle,\quad \rm{along~\partial\S},$$
	where $e\coloneqq -E_{n+1}=(0, \cdots, 0,-1)$, $E_{n+1}$ is the $(n+1)$-th unit coordinate vector in $\ov{\RR^{n+1}_+}$ and hence $e$ is the unit outward normal of $\p \ov{\R^{n+1}_+}$.
If $\Sigma$ is strictly convex,  it is clear that the image of the Gauss map, namely,  $\nu (\Sigma)$, lies on the spherical cap 
	\begin{eqnarray*}
		\SS^n_\theta \coloneqq \left\{ x\in \SS^n \mid  ~ \langle x, E_{n+1} \rangle \geq \cos \theta \right\},
	\end{eqnarray*} see, e.g., \cite[Lemma 2.2]{MWWX}. Instead of using the usual Gauss map $\nu$, it is more convenient to use the following map
	$$\widetilde \nu \coloneqq  T \circ \nu: \S \to \C_\theta,$$
	where $\C_\theta$ is a unit spherical cap centered at $\cos\theta e$, namely
	\begin{eqnarray*}
		\C_{\theta } = \left\{\xi\in \ov{\mathbb{R}^{n+1}_+}\mid ~|\xi-\cos\theta e|= 1 \right\},
	\end{eqnarray*}
	and $T:\SS^n_\theta\to\C_\theta$ is defined by $ T(y)= y+\cos\theta e$,  which is a translation along the vertical direction.  As in \cite{MWW-AIM}, we refer to $\wt \nu$  as the {\it capillary Gauss map} of $\Sigma$.  It turns out that $\wt \nu: \S\to \C_\theta$ is a diffeomorphism. Therefore,  we can reparametrize $\S$ by its inverse on $\C_\theta$. Moreover, the support function $h$ for $\S$ can be regarded as a function defined on $\C_\theta$, which is given by 
\begin{eqnarray*}
    h(\xi)\coloneqq \left\<\xi-\cos\theta e,\wt\nu^{-1}(\xi)\right\>, ~~\xi\in\C_\theta.
\end{eqnarray*}
One can check that $h$ satisfies
\begin{eqnarray*}
    \n_\mu h=\cot\theta h,~~~~\text{ on } ~~ \p \C_\theta,
\end{eqnarray*}where $\mu$ is the outward  unit co-normal of $\p\C_\theta\subset \C_\theta$. For more details about the geometry of the capillary hypersurface, we refer to \cite[Section 2]{MWW-AIM} and  \cite[Section 2]{MWWX}. It is a natural 
 question to ask

\

{\it
When certain data is prescribed on $\C_\theta$, how much information can be recovered through the inverse capillary Gauss map $\wt\nu$ ? } 

\

An important example is the Gauss-Kronecker curvature function defined on $\C_\theta$, which is known as the capillary Minkowski problem and has been investigated by the authors in \cite{MWW-AIM}.  In addition to the Gauss-Kronecker curvature, the $k$-th Weingarten curvature, denoted $W_k$, plays a significant role in differential geometry. It is defined as
\begin{eqnarray*} 
  W_k(X)\coloneqq \s_k(\k(X)),~~~~X\in \S,
\end{eqnarray*}
where $\kappa(X)\coloneqq (\kappa_{1},\cdots, \kappa_{n})$ are the principal curvatures of $\S$ at $X$, and  $\sigma_{k}(\kappa)$ is the $k$-th elementary symmetric function. When $k=1,2$ and $n$, $W_k$ corresponds to the mean curvature, scalar curvature, and Gauss-Kronecker curvature of $\S$ respectively.  In this paper, we further investigate a prescribed $k$-th Weingarten curvature problem of convex capillary hypersurfaces in $\ov{\RR^{n+1}_{+}}$.  The problem is precisely formulated as follows:

\
	
{\noindent	\textbf{Problem:}\textit{ Let $1\leq k<n$, and $f:\C_{\theta}\rightarrow \RR$ be a positive smooth function. Does there exist a strictly convex capillary hypersurface $\S\subset \ov{\RR^{n+1}_{+}}$  such that 
\begin{eqnarray}\label{cur equ}
 W_k \left(\wt\nu^{-1}(\xi) \right)=f(\xi), ~~~~\xi\in \C_\theta ~? 
\end{eqnarray}}


\

In particular, when $\theta=\frac{\pi}{2}$,  by a simple reflection argument, this problem reduces to the classical prescribed $k$-th Weingarten curvature problem for closed convex hypersurfaces.  The case $k=n$ is known as the Minkowski problem, which is related to the prescribed Gauss-Kronecker curvature problem. Thanks to the contributions of Minkowski \cite{Min}, Alexandrov \cite{Alex}, Caffarelli \cite{Caff1990-1, Caff1990-2}, Lewy \cite{Lewy}, Nirenberg \cite{Nire}, Pogorelov \cite{Pog52}, Cheng-Yau \cite{CY76} and many others,  this problem has been completely solved under a necessary and sufficient condition. For the cases where $1\leq k\leq n-1$, the problem of prescribed $k$-th Weingarten curvature turns out to be more subtle and shows a significant difference to the case  $k=n$. In \cite{GG02}, Guan-Guan proved the existence of a strictly convex hypersurface that satisfies Eq.  \eqref{cur equ} when $f$ is invariant under an automorphic group $G$ without fixed points (for example, if $f$ is an even function on $\SS^n$). Subsequently, 
 Sheng-Trudinger-Wang \cite{STW} removed the evenness assumption albeit at the cost of introducing an exponential weight factor into the equation. For more related works; see, e.g., \cite{BIS2023, CNS-4, Chern, Galvez-Mira2016, Ger, Ger-1, Ger-2, Ger2007, GLM06,  GLM, GM2005, GZ, MWW-1} and references therein.

Compared to the closed hypersurface setting, the prescribed curvature problem for convex hypersurfaces with non-empty boundaries or non-compact convex hypersurfaces has been less extensively developed, except some works for the Dirichlet boundary value problem, see, e.g. \cite{Busemann, CW95, GS, Nehring1998crelle, O, Sch1} for the prescribed Gauss-Kronecker curvature and \cite{Ivochkina-Tomi1998CVPDE, nehring1999agag, Schn} for the prescribed  Weingarten curvature resp. 
Recently, the authors \cite{MWW-AIM} studied the capillary Minkowski problem,  which aims to determine the existence of a convex capillary hypersurface with a prescribed Gauss-Kronecker curvature. This problem reduces to a Robin boundary value problem.   We established a necessary and sufficient condition (i.e., \eqref{nece suff cond}) for the solvability of this problem, analogous to the classical Minkowski problem. The main result of \cite[Theorem 1.1]{MWW-AIM} is stated as follows. 

\

\noindent{\bf Theorem A}. {\it
Let $\theta\in (0,\frac{\pi}{2}]$, and $f\in C^2(\C_\theta)$ be a positive function that  satisfies 
\begin{eqnarray}\label{nece suff cond}
   \int_{\C_{\theta}}\<\xi, E_{\alpha}\>f^{-1} dA_\s =0,\quad \forall 1\leq \alpha\leq n,
\end{eqnarray}
where $\{E_{\a}\}_{\a=1}^n$ are the horizontal unit coordinate vectors in $\overline{\RR^{n+1}_+}$ and $dA_\s$ is the standard area element on $\C_\theta$ with respect to the round metric $\s$. Then there exists a $C^{3,\gamma}$ $(\gamma\in(0,1))$ strictly convex capillary hypersurface $\S\subset \ov{\RR^{n+1}_+}$ such that its Gauss-Kronecker curvature $K$ satisfies $$K \left(\widetilde \nu^{-1}(\xi) \right)= {f(\xi)}, ~~~\xi\in \C_\theta.$$ 
  Moreover, $\S$ is unique up to a horizontal translation in $\ov{\RR^{n+1}_+}$.}

 \

{Theorem A} addresses the case $k=n$ in the aforementioned problem \eqref{cur equ}. The primary goal of this article is to investigate the remaining cases, namely, $1\leq k\leq n-1$. These cases present substantial differences and challenges compared to the case $k=n$, as will be discussed later, or see \cite[Section 1]{GG02}. To state the main result concisely, we introduce a definition that serves as a capillary adaptation of the even function on $\SS^n$.
\begin{defn}
    Let $f\in C^{2}(\C_{\theta})$. For any point $\xi=(\xi_{1},\dots, \xi_{n},\xi_{n+1})\in\C_{\theta}$, we denote $\widehat{\xi}\coloneqq (-\xi_{1},\dots, -\xi_{n}, \xi_{n+1})$. If $f$ satisfies $f(\xi)=f(\widehat{\xi})$, we say that $f$ is a capillary even function on $\C_{\theta}$.
\end{defn}
\subsection{Main results}
The following theorem presents the main result of this paper.
\begin{thm}\label{main-thm}
    Let $\theta\in (0, \frac{\pi}{2}]$ and $1\leq k\leq n-1$. Assume that $f$ is a positive smooth  capillary even function on $\C_{\theta}$. Then there exists a strictly convex capillary hypersurface $\S$ in $\overline{\RR^{n+1}_{+}}$ that satisfies Eq. \eqref{cur equ}. 
\end{thm}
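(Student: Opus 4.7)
The plan is to convert Eq.~\eqref{cur equ} into a fully nonlinear Hessian quotient equation for the support function $h$ on the spherical cap $\C_\theta$. Writing $W_{ij}[h] \coloneqq \nabla^2_{ij} h + h\,\sigma_{ij}$ for the matrix of principal radii of curvature and using the elementary symmetric function identity $\sigma_k(1/r) = \sigma_{n-k}(r)/\sigma_n(r)$, Eq.~\eqref{cur equ} applied to $\kappa = 1/r$ becomes
\begin{equation*}
\frac{\sigma_{n-k}(W[h])}{\sigma_n(W[h])} = \frac{1}{f} \quad \text{on } \C_\theta, \qquad \n_\mu h = \cot\theta\,h \quad \text{on } \p\C_\theta,
\end{equation*}
subject to the strict convexity constraint $W[h] > 0$. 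Existence of a strictly convex, capillary even solution will be obtained via a degree-theoretic continuity method, following Guan--Guan \cite{GG02} in spirit but adapted to the capillary setting developed in the authors' earlier work \cite{MWW-AIM}.

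We deform along a family $\{f_t\}_{t\in[0,1]}$ from a constant target (for which an explicit spherical cap yields an exact solution) to $f_1 = f$, arranging that each $f_t$ is positive, smooth, and capillary even. The linearization of the Hessian quotient operator at any strictly convex $h$ is uniformly elliptic, and since $\theta \in (0, \frac{\pi}{2}]$ the Robin condition $\n_\mu h = \cot\theta\,h$ has nonnegative zeroth-order coefficient and is properly oblique; openness along the path then follows from the implicit function theorem. Closedness, together with non-emptiness of the continuity set, reduces to establishing uniform a priori $C^{2,\alpha}$ estimates for strictly convex, capillary even solutions, after which Leray--Schauder degree theory, combined with an explicit computation at the trivial endpoint, delivers a solution at $t=1$.

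The bulk of the work lies in the a priori estimates. For the $C^0$ bound the essential difficulty, already present in \cite{GG02} for closed hypersurfaces with $k<n$, is that horizontal translations of $\Sigma$ in $\ov{\RR^{n+1}_+}$ preserve both the capillary property and the Weingarten curvature viewed as a function of $\wt\nu$; hence the equation admits a non-compact family of trivial symmetries that could a priori allow $h$ to diverge linearly in the horizontal directions. The capillary even hypothesis on $f$ is precisely what breaks this symmetry: one may impose and preserve along the continuity path the symmetry $h(\xi) = h(\wh\xi)$, which pins the horizontal center of $\Sigma$ and rules out escape at infinity. An upper bound on $h$ is then derived through a blow-up and maximum-principle argument, and a positive lower bound via strict convexity together with the positive lower bound on $f$. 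The $C^1$ bound is immediate from convexity. For the $C^2$ bound the interior double-normal estimate proceeds by the standard technique of applying the maximum principle to the operator expressed in the $W$-variables, while the boundary estimates require the barrier constructions adapted to the Robin condition developed in \cite{MWW-AIM}. Once uniform ellipticity is secured, Evans--Krylov and Schauder yield the full $C^{2,\alpha}$ bound, and bootstrapping gives smoothness.

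I expect the main obstacle to be the $C^0$ estimate. Unlike the Minkowski case $k=n$, where the integral condition \eqref{nece suff cond} directly encodes the translational closing requirement, here the argument must combine the capillary even symmetry with a careful analysis of how the Hessian quotient operator degenerates near $\p\C_\theta$ under a blow-up. A secondary, largely technical, difficulty is the boundary $C^2$ estimate under the Robin condition $\n_\mu h = \cot\theta\,h$: the oblique boundary condition interacts delicately with the non-uniform ellipticity of $\sigma_{n-k}/\sigma_n$, and overcoming this requires a careful adaptation of the barriers introduced for the capillary Minkowski problem in \cite{MWW-AIM}.
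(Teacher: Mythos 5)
Your overall framing is right: reformulate \eqref{cur equ} as the Hessian quotient equation \eqref{sup equ} with the Robin condition, impose and preserve the capillary even symmetry along a homotopy from $f\equiv 1$, get openness from ellipticity/obliqueness, and derive existence by degree theory once uniform a priori estimates are in hand. Your description of the reduction of the global $C^2$ estimate to a boundary double-normal estimate, and of the barrier construction near $\p\C_\theta$ adapted from \cite{MWW-AIM}, matches what the paper does in Lemmas \ref{lem-c2 bry} and \ref{lem-double}.

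The genuine gap is in the $C^0$ estimate, which you correctly flag as the crux but leave essentially open. You propose ``a blow-up and maximum-principle argument,'' but the maximum principle applied directly to \eqref{eq of u} only yields an upper bound on $\min_{\C_\theta}u$ (equivalently on the capillary \emph{inner} radius $\rho_-(\wh\S,\theta)$), not on $\|h\|_{C^0}$, and the sketched blow-up analysis is not a concrete plan; nothing in the proposal explains why $\rho_+(\wh\S,\theta)$ cannot escape to infinity while $\rho_-$ stays bounded, which is precisely the failure mode one needs to exclude for $k<n$. Your proposed ordering (first $C^0$, then $C^1$, then $C^2$) is also at odds with what actually closes the argument. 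The paper inverts the logic: it first establishes the \emph{quantitative} $C^2$ estimate
\[
\max_{\C_\theta}\lambda_n \leq C\bigl(1+\|h\|_{C^0(\C_\theta)}\bigr),
\]
with constant independent of $\|h\|_{C^1}$ (this is the content of Theorem \ref{thm C2} and the point of the careful bookkeeping in Lemmas \ref{lem-c2 bry} and \ref{lem-double}); then it uses the maximum principle to get $\rho_-(\wh\S,\theta)\leq C$ (Lemma \ref{lem C0}); and finally it invokes the capillary adaptation of Chou--Wang's geometric lemma (Lemma \ref{chou-wang-lemma}),
\[
\frac{\rho_+(\wh\S,\theta)^2}{\rho_-(\wh\S,\theta)} \leq C(n)\max_{\C_\theta}\lambda_n,
\]
which combined with the previous two bounds yields $\rho_+^2 \leq C(1+\rho_+)$ and hence $\rho_+(\wh\S,\theta)\leq C$, i.e.\ the $C^0$ bound. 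This chain---Chou--Wang plus a $C^2$ bound linear in $\|h\|_{C^0}$---is the new structural idea the result turns on, and it is missing from your proposal; without it, your closedness step does not close. (A secondary omission: at the trivial endpoint $t=0$ you need to know that the spherical cap is the \emph{unique} capillary even convex solution, which the paper imports from \cite[Corollary 1.2]{JWXZ}, to compute the degree; ``explicit computation at the trivial endpoint'' glosses over this.)
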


We remark that the restriction on angle  $\theta\in (0, \frac{\pi}{2}]$ is crucially used in two places. The first one is in Lemma \ref{chou-wang-lemma}, where this restriction guarantees that  Chou-Wang's geometric Lemma holds.  The second place is in deriving the $C^2$-estimate.  
See Section \ref{sec3} for more details. One may understand the condition as a convexity of the domain ${\mathcal C}_\theta$. The convexity of domains plays an essential role in addressing Dirichlet or Neumann boundary problems to  fully nonlinear PDEs, see, e.g., \cite{CNS-1, LTU, MQ} among many other results. Nevertheless, we expect that the result still holds for $\theta > \frac{\pi}{2}$.

As a byproduct, we can establish the uniqueness of the solution to the problem \eqref{cur equ}.
This holds when $f$ is a capillary even function and is close to $1$ in the $C^{\alpha}$ norm.

\begin{thm}\label{thm-1.2}
    Let $\theta\in (0, \frac{\pi}{2}]$ and $1\leq k\leq n-1$. Suppose $f$ is a positive, smooth, capillary even function on $\C_{\theta}$. Then there exists a small positive constant $\varepsilon_{0}$ such that if 
    $$\|f^{-1}-1\|_{C^{\alpha}(\C_{\theta})}\leq \varepsilon_{0},$$ the solution to Eq. \eqref{cur equ} is unique.  
\end{thm}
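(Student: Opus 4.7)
The plan is a standard linearization/implicit function theorem argument around the model spherical cap, using the capillary even symmetry to eliminate the horizontal translation kernel of the linearization. (Since horizontal translates of any solution to \eqref{cur equ} are again solutions to the same equation, ``uniqueness'' is to be understood modulo this $n$-dimensional symmetry, equivalently as uniqueness within the capillary even class.) Following the setup of Section 2 of \cite{MWW-AIM}, I would first recast \eqref{cur equ} in terms of the support function $h:\mathcal{C}_\theta\to\mathbb{R}$ as the Hessian quotient problem
\[
\frac{\sigma_n(\nabla^2 h+h\sigma)}{\sigma_{n-k}(\nabla^2 h+h\sigma)}=\frac{1}{f}\quad\text{on }\mathcal{C}_\theta,\qquad \nabla_\mu h=\cot\theta\, h \quad\text{on }\partial\mathcal{C}_\theta.
\]
When $f\equiv 1$, the unique capillary even solution is the spherical cap $\Sigma_*$ of radius $r_0=\binom{n}{k}^{1/k}$ centered on the vertical axis, with support function $h_*$. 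Since $f$ is capillary even, the entire boundary value problem is invariant under $\xi\mapsto\hat\xi$, so one may restrict a priori to capillary even support functions.

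Given two capillary even solutions $h_1,h_2$, set $w=h_1-h_2$ and $h_t=(1-t)h_2+th_1$. Subtracting the equations and integrating in $t$ yields the linear Robin boundary value problem
\[
\tilde L w := \Bigl(\int_0^1 L_{h_t}\,dt\Bigr) w = 0, \qquad \nabla_\mu w=\cot\theta\, w \quad\text{on }\partial\mathcal{C}_\theta,
\]
where $L_{h_t}$ denotes the linearization of the Hessian quotient operator at $h_t$. The $C^{2,\alpha}$ a priori estimates developed in the course of proving Theorem \ref{main-thm}, together with uniqueness of the model capillary even solution for $f\equiv 1$, give by a standard compactness argument $\|h_i-h_*\|_{C^{2,\alpha}}\to 0$ as $\varepsilon_0\to 0$; consequently $\tilde L$ converges in operator norm to the model linearization $L_*:=L_{h_*}$.

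The crux is the spectral analysis of $L_*$. Because $h_*$ is the support function of a round spherical cap, all principal radii coincide, and $L_*$ reduces to the scalar operator $L_* w = c_0(\Delta_\sigma w + n w)$ for an explicit constant $c_0>0$. Viewing $\mathcal{C}_\theta$ as the unit sphere centered at $-\cos\theta\, E_{n+1}$, the kernel of $\Delta_\sigma+n$ is spanned by $\xi_1,\dots,\xi_n,\xi_{n+1}+\cos\theta$. A direct computation shows $\nabla_\mu\xi_\alpha=\cot\theta\,\xi_\alpha$ for $1\le\alpha\le n$, whereas imposing the Robin condition forces the coefficient of $\xi_{n+1}+\cos\theta$ to vanish. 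Hence the Robin kernel of $L_*$ is exactly $\operatorname{span}\{\xi_1,\dots,\xi_n\}$---the infinitesimal horizontal translations of $\Sigma_*$---and is entirely capillary odd. Restricted to capillary even functions the kernel is trivial; by self-adjointness of the Robin problem (via Green's identity, the boundary contributions cancel whenever both test functions satisfy the same Robin condition) the cokernel vanishes as well, so $L_*$ is an isomorphism on the capillary even $C^{2,\alpha}$ subspace. A perturbative argument then shows that $\tilde L$ is also invertible for $\varepsilon_0$ sufficiently small, forcing $w=0$.

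The main obstacle is the spectral step: explicitly identifying the Robin kernel of $L_*$ and controlling the cokernel via self-adjointness. The remaining ingredients---the a priori closeness of any solution to $h_*$ via the existence theory, and the perturbative invertibility of $\tilde L$---are standard once the spectral picture at the model solution is in place.
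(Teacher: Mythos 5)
Your proposal is essentially the same argument as the paper's. The paper first proves an auxiliary lemma: if $h_1,h_2$ are capillary even solutions that are $C^0$-close to the model $\widetilde\ell=\binom{n}{k}^{1/k}\ell$, then writing the difference as an integrated linearization $\sum_{i,j}a^{ij}\bigl[(h_1-h_2)_{ij}+(h_1-h_2)\sigma_{ij}\bigr]=f^{-1}-1$, using the uniform ellipticity from Theorem \ref{thm est} and Schauder theory for oblique boundary conditions to upgrade to $C^{2,\alpha}$-closeness, and then invoking the inverse function theorem via invertibility of the model linearization $\mathcal L\varphi=a_0(\Delta\varphi+n\varphi)$ on the capillary even class. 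It then completes the proof by a compactness/contradiction argument using Theorem \ref{thm est} and the uniqueness of the constant-$W_k$ capillary cap (from \cite{JWXZ}) to show every solution with $\|f^{-1}-1\|_{C^\alpha}$ small is automatically $C^0$-close to $\widetilde\ell$. Your plan reorders these two steps (compactness first, then linearization), and you spell out the spectral identification of the Robin kernel of $\Delta_\sigma+n$ as precisely the linear span of the horizontal coordinate functions (which the paper delegates to \cite[Lemma 4.3]{MWW-AIM}), but these are cosmetic differences; the key ingredients — integrated linearization, the a priori estimates of Theorem \ref{thm est}, and invertibility of $L_*$ on the capillary even subspace — are identical.
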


For solving the capillary Minkowski problem, condition \eqref{nece suff cond} is both necessary and sufficient, as established by Theorem A. This condition also serves as a necessary condition for the solvability of the capillary Christoffel-Minkowski problem (cf. \cite[Proposition 2.6]{MWW-CM}), which concerns the existence of the capillary convex body with a prescribed $k$-th capillary area measure. However, by employing the approach from \cite[Section 4]{GG02}, we show that this condition is not sufficient for the solvability of Eq. \eqref{cur equ}. Specifically, we establish the following theorem.
\begin{thm}\label{thm-example}
    For any $1\leq k\leq n-1$, there exists a  one-parameter family of capillary strictly convex hypersurfaces in $\ov{\RR^{n+1}_{+}}$ satisfying 
        \begin{eqnarray*}
            \int_{\C_{\theta}}\frac{\<\xi, E_{\alpha}\>}{W_{k}(\widetilde{\nu}^{-1}(\xi))}dA_\sigma\neq 0, \quad 1\leq \alpha\leq n.
        \end{eqnarray*}
\end{thm}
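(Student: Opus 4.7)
The plan is to adapt the Guan--Guan counterexample from \cite[Section 4]{GG02} to the capillary setting. Let $h_0(\xi)=\sin^{2}\theta-\cos\theta\,\xi_{n+1}$ be the support function on $\C_\theta$ of the unit capillary spherical cap, for which all principal radii equal $1$ and hence $W_k\equiv\binom{n}{k}$. Given a smooth $\phi\in C^{\infty}(\C_\theta)$ satisfying the Robin compatibility $\nabla_\mu\phi=\cot\theta\,\phi$ on $\partial\C_\theta$, the family $h_t = h_0 + t\phi$ defines, for $|t|$ small, a one-parameter family of strictly convex capillary hypersurfaces $\Sigma_t\subset\overline{\RR^{n+1}_{+}}$ with contact angle $\theta$, since $\nabla^{2} h_t + h_t\sigma = \sigma + t(\nabla^{2}\phi+\phi\sigma)$ stays positive definite. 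The aim is to choose $\phi$ so that
\[
I_\alpha(t)\coloneqq\int_{\C_\theta}\frac{\<\xi,E_\alpha\>}{W_k(\widetilde\nu^{-1}(\xi))}\,dA_\sigma
\]
is nonzero, at least for one $\alpha\in\{1,\dots,n\}$ and all sufficiently small $t\neq 0$.

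Using $W_k^{-1}=\sigma_n(r)/\sigma_{n-k}(r)$, where $r_i$ are the eigenvalues of $\nabla^{2} h_t + h_t\sigma$, a direct expansion yields
\[
W_k^{-1}(t,\xi) = \binom{n}{k}^{-1} + \frac{t\,k}{n\binom{n}{k}}\bigl(\Delta\phi+n\phi\bigr) + O(t^{2}).
\]
Because $\xi_\alpha$ ($\alpha\leq n$) is the restriction of a linear ambient function to the unit sphere underlying $\C_\theta$, one has $\Delta_{\C_\theta}\xi_\alpha=-n\xi_\alpha$ and $\xi_\alpha$ itself satisfies the Robin condition; integration by parts then gives
\[
\int_{\C_\theta}\xi_\alpha(\Delta\phi+n\phi)\,dA_\sigma = \int_{\C_\theta}\phi(\Delta\xi_\alpha+n\xi_\alpha)\,dA_\sigma = 0,
\]
so $I_\alpha'(0)=0$ for every admissible $\phi$. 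This is precisely the infinitesimal version of the necessary condition \eqref{nece suff cond}; the counterexample must therefore be sought at second order.

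To this end, I propose $\phi = \xi_1\,\psi(\xi_{n+1})+\xi_2^{2}\,g(\xi_{n+1})$, where $\psi, g$ are smooth and adjusted so that the Robin condition holds. The latter reduces at $\xi_{n+1}=0$ to $\psi'(0)=0$ and $g'(0)=g(0)\cos\theta/\sin^{2}\theta$, both easy to satisfy for generic $\psi,g$. Since $\phi$ has no definite parity in $\xi_1$, the body $\Sigma_t$ is not related in any simple way to $\Sigma_{-t}$ under the reflection $\xi_1\mapsto -\xi_1$, so the second-order Taylor coefficient of $I_1(t)$ receives a nonvanishing cross contribution from the bilinear term $L_2(\xi_1\psi,\xi_2^{2} g)$ appearing in the expansion of $W_k^{-1}$. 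Writing this coefficient as a linear combination of the explicit moments $\int_{\C_\theta}\xi_1^{a}\xi_2^{b}\xi_{n+1}^{c}\,dA_\sigma$, which are Beta-type integrals in $\cos\theta$, and checking by a direct computation that it is nonzero for a suitable pair $(\psi,g)$, one concludes $I_1(t)\neq 0$ for all small $t\neq 0$, producing the required one-parameter family. The main technical obstacle is the cancellation at linear order, enforced by the self-adjointness of $\Delta+n$ under the Robin boundary condition on $\C_\theta$; navigating around it by means of the asymmetric perturbation $\phi$ and then verifying nonvanishing of the resulting explicit quadratic integral is the heart of the construction.
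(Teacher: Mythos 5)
Your overall strategy matches the paper's: perturb the unit cap as $h_t=\ell+t\phi$, observe that the first--order coefficient of $\int_{\C_\theta}\langle\xi,E_\alpha\rangle\,W_k^{-1}\,dA_\sigma$ vanishes for structural reasons, and go to second order. The first--order vanishing via self-adjointness of $\Delta+n$ under the Robin condition is fine (though your coefficient $\frac{tk}{n\binom{n}{k}}$ should be $\frac{t(n-k)}{n\binom{n}{k}}$; this does not affect the vanishing). But the heart of the theorem is the nonvanishing at second order, and that is precisely what you leave as an assertion. You write that one verifies nonvanishing of the quadratic integral ``by a direct computation,'' without spelling out the expansion, without the identity $\int_{\C_\theta} H_2(\nabla^2\phi+\phi\sigma)\langle\xi,E_\alpha\rangle\,dA_\sigma=0$ that is needed to kill the $H_2$ contribution at second order (self-adjointness of $\Delta+n$ gives you $i=1$ only; $i=2$ needs the divergence structure of the Newton tensor, cf.\ \eqref{zero-1}), and without actually evaluating the surviving term $\int_{\C_\theta}\langle\xi,E_1\rangle\,H_1(\phi)^2\,dA_\sigma$ for your proposed $\phi=\xi_1\psi+\xi_2^2 g$. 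As stated, the proof is incomplete.

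The paper closes exactly this gap with a clean inversion trick that your proposal misses. Using the precise expansion $\frac{H_n}{H_{n-k}}(\nabla^2 h_t+h_t\sigma)=1+(n-k)H_1 t+\frac{n-k}{2}\bigl[(n+k-1)H_2-2kH_1^2\bigr]t^2+O(t^3)$ and $\int H_i\langle\xi,E_\alpha\rangle\,dA_\sigma=0$ for every $i,\alpha$, the second--order contribution reduces to $-k(n-k)\int_{\C_\theta} H_1(\nabla^2 v+v\sigma)^2\langle\xi,E_1\rangle\,dA_\sigma$. Rather than choosing $v$ directly, the paper first chooses the \emph{target} $g$ (a cut-off times $\cos 2\theta_n+\sin 3\theta_n$ in spherical coordinates) with $\int g\,\langle\xi,E_\alpha\rangle=0$ for all $\alpha$ and $\int g^2\langle\xi,E_1\rangle\neq0$, and then solves $\frac{1}{n}(\Delta v+nv)=g$ with the Robin condition by the Fredholm alternative, which is applicable exactly because $g\perp\mathrm{Ker}\,\mathcal{L}$. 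Then $\int H_1(\nabla^2 v+v\sigma)^2\langle\xi,E_1\rangle\,dA_\sigma=\int g^2\langle\xi,E_1\rangle\,dA_\sigma\neq0$ follows immediately, and the last integral is a one-line Fourier computation in $\theta_n$. If you want to salvage your approach, you should either run this inversion step, or actually carry out the quadratic moment computation for your $\phi$ and exhibit a pair $(\psi,g)$ for which the cross term $\int_{\C_\theta}\langle\xi,E_1\rangle\,H_1(\xi_1\psi)\,H_1(\xi_2^2 g)\,dA_\sigma$ is provably nonzero; without one of these, the argument has a genuine hole.
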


It would be interesting to ask if condition \eqref{nece suff cond} is a sufficient condition for the solvability of Eq. \eqref{cur equ}. 
The primary challenge lies in establishing Theorem \ref{thm est} below without the capillary even assumption. If this obstacle can be overcome, following a similar argument as in \cite[Section 4]{GG02}, we can then show that condition \eqref{nece suff cond} is not a sufficient condition.


\subsection{Outline of the proof}
To start with, by adopting the arguments presented in \cite[Proposition 2.4]{MWW-AIM} (see also \cite[Lemma~2.4, Proposition~2.6]{MWWX}), Eq. \eqref{cur equ} is equivalent to  solving  the following Hessian quotient equation with a Robin boundary condition, 
\begin{eqnarray}\label{sup equ} \left\{
\begin{array}{rcll}\vspace{2mm}\displaystyle
	 \frac{\sigma_{n}(\n^{2}h+h\sigma)}{\sigma_{n-k}(\n^{2}h+h\sigma)} &= &f^{-1} & \text{ in } \C_\theta,\\ 
	\n_\mu h&=& \cot\theta h & \text{ on } \p \C_\theta.\end{array} \right.
\end{eqnarray} 
where   $h$ is an unknown function on $\C_\theta$, $\n h$ and $\n^2h$ are the gradient and the Hessian of $h$ on $\C_\theta$ w.r.t. the standard spherical metric $\sigma$ on $\C_{\theta}$ resp. 

Guan-Guan \cite[Section~2]{GG02}  transformed the classical prescribed Weingarten curvature problem into solving Eq. \eqref{sup equ} on $\SS^{n}$ and approached it using the degree theory. A key element of their method is the establishment of a priori estimates for solutions of Eq. \eqref{sup equ}. Under a suitable regularity assumption on $f$, Guan-Guan utilized the specific structure of Eq. \eqref{sup equ} to derive positive lower and upper bounds for principal curvatures of the convex hypersurface. Notably, these bounds are independent of $\|h\|_{C^{1}(\SS^{n})}$. Subsequently, they applied Cheng-Yau's Lemma (see \cite[Lemma~3 and Lemma~4]{CY76}) to obtain $C^{0}$ estimate, and the  $C^{1}$ estimate  was obtained via interpolation. However, due to the additional Robin boundary condition, the argument of Guan-Guan does not seem to directly apply to our setting.

In this paper, we adopt an alternative approach to establish a priori estimates for the solution of Eq. \eqref{sup equ} with the Robin boundary value condition.  We briefly outline the proof to conclude this section. Denote $\widehat{\S}$ as the convex body enclosed by $\S$ and $\partial\ov{\RR^{n+1}}$. Let $\rho_{-}(\widehat{\S}, \theta)$ and $\rho_{+}(\widehat{\S}, \theta)$ be the capillary inner and outer radius of convex body $\widehat\S$ respectively (cf. Sinestrari-Weng \cite[Section 2.2]{SW} or Section \ref{sec-2.2}). Let $\lambda\coloneqq(\lambda_{1},\cdots, \lambda_{n})$ be the eigenvalues of $A\coloneqq\n^{2}h+h\s$, namely,  the principal radii of $\S$. For convenience, assume that  $\lambda_{1}\leq \l_2\leq  \cdots\leq \lambda_{n}$.  First, using the special structure of Eq. \eqref{sup equ} and the maximum principle, we derive 
\begin{eqnarray}\label{r upper}
    \rho_{-}(\widehat{\S},\theta)\leq C.
\end{eqnarray}
Next, we establish a quantitative dependence relationship between the maximal eigenvalue  of $A=\n^{2}h+h\s$ (i.e., the maximal principal radii of $\S$) and  $\|h\|_{C^{0}(\C_{\theta})}$ : 
\begin{eqnarray}\label{key est}
   \max\limits_{\C_{\theta}} \lambda_{n}\leq C(1+\|h\|_{C^{0}(\C_{\theta})}),
\end{eqnarray}
where the constant $C$ in \eqref{r upper} and \eqref{key est} depends only on $n, k$ and $f$. This also provides a refined estimate compared to our previous work in \cite[Section 3]{MWW-AIM} for $k=n$ of Eq. \eqref{sup equ}. 
By adapting a geometric lemma of Chou-Wang \cite[Lemma~2.2]{CW00} to the capillary setting, we obtain the following key inequality:
\begin{eqnarray}\label{key geo}
    \frac{\rho_{+}(\widehat{\S},\theta)^{2}}{\rho_{-}(\widehat{\S}, \theta)}\leq C(n)\max\limits_{\C_{\theta}}\lambda_{n},
\end{eqnarray} where $C(n)$ is a positive constant depending only on $n$. Combining \eqref{r upper}, \eqref{key est}, and \eqref{key geo}, we obtain the positive lower and upper bounds for the support function and  $C^2$ estimate simultaneously. Finally, by applying standard degree theory (see, e.g., \cite{GG02} or \cite{GMZ}.), we solve the prescribed Weingarten curvature problem for convex capillary hypersurfaces for capillary even 
functions (Theorem \ref{main-thm}). In particular, this also provides an alternative proof of the results in \cite{GG02}. 

\

\textbf{Organization of the paper.} 
In Section \ref{sec2}, we review the basic properties of elementary symmetric functions and present an important geometric lemma. In Section \ref{sec3}, we derive a priori estimates for the admissible solutions of Eq. \eqref{sup equ}. Section \ref{sec4} is devoted to completing the proof of Theorem \ref{main-thm} using degree theory and applying the inverse function theorem to finalize the proof of Theorem \ref{thm-1.2}. In Section \ref{sec5}, we complete the proof of Theorem \ref{thm-example} by following the strategy presented in \cite[Section~4]{GG02} with minor modifications.

\section{Preliminaries}\label{sec2}
In this section, we summarize some well-known properties of elementary symmetric functions. We then adapt a geometric lemma due to Chou-Wang \cite{CW00} to our capillary setting, which provides a relation between the ratio of the capillary inner and outer radii of the capillary convex body and its principal radii.

\subsection{Elementary symmetric functions}
\begin{defn}
    Let $W=\{W_{ij}\}$ be an $n\times n$ symmetric matrix,   
    \begin{eqnarray*} 
        \sigma_{k}(W)\coloneqq\sigma_{k}(\lambda(W))=\sum\limits_{1\leq i_{1}<i_{2}\cdots< i_{k}\leq n}\lambda_{i_{1}}\lambda_{i_{2}}\cdots \lambda_{i_{k}},~~1\leq k\leq n,
    \end{eqnarray*}
    where $\l\coloneqq\lambda(W)=(\lambda_{1}(W), \lambda_{2}(W), \dots, \lambda_{n}(W))$ is the  set of eigenvalues of  $W$. 

\end{defn}

We use the convention that $\sigma_0=1$ and $\sigma_k =0$ for $k>n$. Let $H_k(\lambda)$ be the normalization of 
$\sigma_{k}(\lambda)$ given by \begin{eqnarray*}
    H_k(\lambda)\coloneqq\frac{1}{\binom{n}{k}}\s_k(\lambda).
\end{eqnarray*} Denote  $\sigma _k (\lambda \left| i \right.)$ the symmetric
	function with $\lambda_i = 0$ and $\sigma _k (\lambda \left| ij \right.)$ the symmetric function with $\lambda_i =\lambda_j = 0$.  Recall that the  G{\aa}rding cone is defined as
\begin{eqnarray*} 
\Gamma_k \coloneqq  \left\{ \lambda  \in \mathbb{R}^n \mid \sigma _i (\lambda ) > 0,~~\forall 1 \le i \le k \right\}.
\end{eqnarray*} 

\begin{defn}
     A function $h\in C^2(\C_\theta)$  is called admissible if $$A\coloneqq\n^2 h(\xi)+h(\xi)\s\in \Gamma_{n}$$
     for all $\xi\in \C_{\theta}$.
\end{defn}
We denote $\sigma _k(W\left|
i \right.)$ the symmetric function with $W$ deleting the $i$-row and
$i$-column and $\sigma _k (W \left| ij \right.)$ the symmetric
function with $W$ deleting the $i,j$-rows and $i,j$-columns. 
\begin{prop}\label{prop2.1}
Suppose that  $W=\{W_{ij}\}$ is diagonal, and $1\leq k\leq n$, 
then
\begin{eqnarray*}
\sigma_{k-1}^{ij}(W)= \begin{cases}
\sigma _{k- 1} (W\left| i \right.), &\text{ if } i = j, \\
0, &\text{ if } i \ne j,
\end{cases}
\end{eqnarray*}
where $\sigma_{k-1}^{ij}(W)\coloneqq \frac{{\partial \sigma _k (W)}} {{\partial W_{ij} }}$.
\end{prop}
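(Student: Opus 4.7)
The plan is to prove this standard identity via the characteristic polynomial, which packages all the $\sigma_k$ into a single generating function and converts the statement into a one-line cofactor computation.

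First I would recall the identity
\begin{equation*}
\det(tI + W) \;=\; \sum_{k=0}^{n} \sigma_k(W)\, t^{n-k},
\end{equation*}
and differentiate both sides with respect to the entry $W_{ij}$. On the right we pick up $\sum_{k} \sigma_{k-1}^{ij}(W)\, t^{n-k}$ (using the notation of the proposition), while on the left, by the usual rule $\partial \det M/\partial M_{ij} = \mathrm{cof}_{ij}(M)$, we obtain the $(i,j)$-cofactor of $tI+W$. Thus the proposition reduces to computing this cofactor when $W$ is diagonal with entries $\lambda_1,\dots,\lambda_n$.

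Next I would handle the two cases separately. For $i\neq j$: the matrix $tI+W$ is diagonal, so after deleting row $i$ and column $j$, the row that was originally row $j$ of $tI+W$ has lost its only nonzero entry (the diagonal one at position $(j,j)$), and is therefore identically zero in the submatrix. Hence the minor vanishes, so $\sigma_{k-1}^{ij}(W) = 0$ for every $k$, as claimed. For $i=j$: deleting row $i$ and column $i$ yields a diagonal matrix with entries $\{t+\lambda_\ell\}_{\ell\neq i}$, whose determinant is
\begin{equation*}
\prod_{\ell\neq i}(t+\lambda_\ell) \;=\; \sum_{k=0}^{n-1} \sigma_{k}(\lambda\,|\,i)\, t^{n-1-k} \;=\; \sum_{k=1}^{n} \sigma_{k-1}(\lambda\,|\,i)\, t^{n-k}.
\end{equation*}

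Finally, I would match coefficients of $t^{n-k}$ in this expansion against $\sum_{k}\sigma_{k-1}^{ii}(W)\,t^{n-k}$ to read off $\sigma_{k-1}^{ii}(W) = \sigma_{k-1}(\lambda\,|\,i) = \sigma_{k-1}(W\,|\,i)$, completing the proof. There is no real obstacle here; the only subtlety is being careful about indexing (the proposition states the derivative of $\sigma_k$ but labels it $\sigma_{k-1}^{ij}$, so one must track the shift $k \mapsto k-1$ when reading off coefficients), and bookkeeping in the cofactor sign is irrelevant because the off-diagonal minor vanishes outright.
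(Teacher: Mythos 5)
The paper does not actually prove Proposition 2.1; it cites it as a standard fact and refers the reader to Lieberman \cite[Chapter XV, Section 4]{L96} and to Spruck's lecture notes. Your generating-function argument via $\det(tI+W)=\sum_k \sigma_k(W)\,t^{n-k}$ and the cofactor rule is a correct and entirely standard way to establish the identity, and the two cases ($i\ne j$ minor has a zero row; $i=j$ minor is $\prod_{\ell\neq i}(t+\lambda_\ell)$, expanded and matched coefficient-by-coefficient) are handled cleanly. One small remark worth being aware of, though it does not change the conclusion: if $W$ is treated as a \emph{symmetric} matrix with $W_{ij}=W_{ji}$ identified as a single variable, then $\partial\det(tI+W)/\partial W_{ij}$ for $i\neq j$ is $\mathrm{cof}_{ij}+\mathrm{cof}_{ji}$ rather than the single cofactor; since both minors vanish for diagonal $W$, the off-diagonal case still gives $0$, and the diagonal case is unaffected, so the proposition holds under either derivative convention.
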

\begin{prop}\label{pro-2.3}\ 
    \begin{enumerate}
        \item  For $\lambda \in \Gamma_k$ and $k > l \geq 0$, $ r > s \geq 0$, $k \geq r$, $l \geq s$, 
\begin{eqnarray*}  
\left(\frac{H_{k}(\lambda)}{H_{l}(\lambda)}\right)^{\frac{1}{k-l}}\leq \left(\frac{H_{r}(\lambda)}{H_{s}(\lambda)}\right)^{\frac{1}{r-s}},
\end{eqnarray*}
with equality holds if and only if $\lambda_1 = \lambda_2 = \cdots =\lambda_n >0$.
\item For $0\le l<k\le n$, $\left(\frac{H_k(\lambda)}{H_{l}(\lambda)}\right)^{\frac{1}{k-l}}$ is concave in $\Gamma_
k$.
    \end{enumerate}
\end{prop}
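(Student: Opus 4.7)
The plan is to derive both statements from Newton's basic inequality
\[
H_{k-1}(\lambda)\,H_{k+1}(\lambda)\le H_k(\lambda)^2, \qquad \lambda\in\Gamma_{k+1},\ 1\le k\le n-1,
\]
with equality iff $\lambda_1=\cdots=\lambda_n$. I would obtain this by induction on $n$: differentiate the generating polynomial $p(t)=\prod_{i=1}^n(t+\lambda_i)$ and apply Rolle's theorem to reduce the number of variables, then exploit the $k$-homogeneity of $H_k$. Alternatively, it can be quoted directly from Marcus-Lopes or Hardy-Littlewood-P\'olya.

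For part (1), Newton's inequality is equivalent to the log-concavity of the sequence $j\mapsto \log H_j(\lambda)$ on $\Gamma_k$, i.e., the consecutive forward differences $\log(H_{j+1}/H_j)$ are non-increasing in $j$. Since the secant slope of a concave sequence is non-increasing in each endpoint separately, the average
\[
\frac{\log H_k(\lambda)-\log H_l(\lambda)}{k-l}
\]
is non-increasing under the simultaneous rightward shifts of the two endpoints. Applied to the shift $(s,r)\mapsto (l,k)$ with $l\ge s$ and $k\ge r$ (the hypotheses of the statement), this yields the desired ordering, and exponentiation produces the Newton-Maclaurin inequality. The equality case is preserved at each telescoping step and forces $\lambda_1=\cdots=\lambda_n>0$.

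For part (2), I would invoke G\aa rding's theory of hyperbolic polynomials: $\sigma_k$ is hyperbolic with respect to $\mathbf{1}=(1,\dots,1)$, and its associated G\aa rding cone coincides with $\Gamma_k$; G\aa rding's inequality then yields concavity of $\sigma_k^{1/k}$ on $\Gamma_k$. The ratio version $(H_k/H_l)^{1/(k-l)}$ on $\Gamma_k$ follows by iterating the concavity on consecutive pairs $(k,k-1),(k-1,k-2),\dots$ and combining with the monotonicity from part (1). A hands-on alternative is a direct Hessian computation along a generic line in $\Gamma_k$, which, after an orthogonal reduction to the diagonal case, becomes an algebraic inequality derivable from the Newton-Maclaurin chain. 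The main obstacle in a fully self-contained exposition is really the base case (Newton's inequality) together with the sharpness needed for G\aa rding's inequality; in this paper these are standard tools, so the natural route is simply to cite them from Lin-Trudinger, Marcus-Lopes, or G\aa rding rather than reprove them.
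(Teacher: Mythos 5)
The paper does not prove Proposition~2.3 at all: it simply cites Lieberman (Chapter~XV, Section~4) and Spruck (Lemma~2.10, Theorem~2.11). Your closing recommendation---to quote these standard references rather than reprove them---therefore matches exactly what the paper does, so at the level of what is actually written your proposal is faithful to the source.

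Your additional sketch goes further than the paper. For part~(1) the sketch is sound: Newton's inequality $H_{j-1}H_{j+1}\le H_j^2$ gives log-concavity of $j\mapsto H_j(\lambda)$ on $\Gamma_k$ (where all the $H_j$ with $0\le j\le k$ are positive), and the standard fact that the secant slope $\frac{\phi(b)-\phi(a)}{b-a}$ of a concave sequence is non-increasing in each endpoint does yield $\frac{\log H_k-\log H_l}{k-l}\le\frac{\log H_r-\log H_s}{r-s}$ for $s\le l$, $r\le k$, with equality tracing back through the Newton steps to $\lambda_1=\cdots=\lambda_n>0$. For part~(2), however, the argument as stated has a gap: G\aa rding's inequality gives concavity of $\sigma_k^{1/k}$ (the case $l=0$), and you cannot pass from that to the quotient $(\sigma_k/\sigma_l)^{1/(k-l)}$ merely by ``iterating on consecutive pairs and combining with monotonicity from part~(1)''---monotonicity of a family of functions does not transfer concavity, and a product of concave functions need not be concave. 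The missing lemma is that each consecutive quotient $\sigma_j/\sigma_{j-1}$ is itself concave on $\Gamma_j$ (a separate, nontrivial fact, usually proved via the same hyperbolic-polynomial machinery or via Marcus--Lopes), after which one uses that the geometric mean of nonnegative concave functions is concave, writing $\left(\frac{\sigma_k}{\sigma_l}\right)^{\frac{1}{k-l}}=\left(\prod_{j=l+1}^{k}\frac{\sigma_j}{\sigma_{j-1}}\right)^{\frac{1}{k-l}}$. If you intend to flesh out the sketch rather than cite, that is the step to make explicit; as it stands, pointing to Lin--Trudinger or Spruck, as you and the paper both do, is the cleaner choice.
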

For proofs of Propositions \ref{prop2.1}-\ref{pro-2.3}, see, e.g., \cite[Chapter XV, Section 4]{L96} and \cite[Lemma 2.10, Theorem 2.11]{Spruck}, resp.

\subsection{Chou-Wang's lemma}\label{sec-2.2}
In this subsection, we present a geometric lemma that allows us to control the ratio of the capillary inner and outer radii of a capillary convex body. This result was originally obtained by Chou and Wang for smooth closed strictly convex hypersurfaces, as shown in \cite[Lemma 2.2]{CW00}, see also \cite[Lemma~15.11]{Ben}.  We obtain an analogous result for  strictly convex capillary hypersurfaces in $\ov{\RR^{n+1}_{+}}$. Before delving into the details, we first review some standard notions about the convex body $\wh\S$, which is enclosed by $\S$ and $\partial\ov{\RR^{n+1}}$.
  For the convex body  $\widehat{\S}\subset \ov{\RR^{n+1}_{+}}$, we recall the classical notion of the inner radius of $\widehat{\S}$ 
  \begin{eqnarray*}    
 \rho_{-}(\widehat{\S})\coloneqq\sup\{\rho>0\mid~  B_{\rho}(x_{0})\subset \widehat{\S}~\text{for~some~}x_{0}\in {\RR^{n+1}_{+}}\},\end{eqnarray*} and the outer radius of $\S$ is defined by
 \begin{eqnarray*}    
 \rho_{+}(\widehat{\S})\coloneqq\inf\{\rho>0\mid ~ \widehat{\S}\subset B_{\rho}(x_{0})~\text{for~some~}x_{0}\in {\RR^{n+1}_{+}}\},\end{eqnarray*} 
 where $B_{\rho}(x_0)$ is the ball of radius $\rho$ centered at $x_0$ in $\RR^{n+1}$.
 
By \cite[Section 2.2]{SW}, the capillary inner radius of $\wh\S\subset\ol{\RR^{n+1}_+}$ is defined as
\begin{eqnarray*}
\rho_{-}(\widehat\Sigma, \theta)\coloneqq\sup \left\{r>0 ~\mid~ \widehat{\C_{r,\theta}(x_0)}\subset \widehat\S\text{ for some } x_0\in \p  \ov{\RR^{n+1}_+}\right\},\end{eqnarray*}and 
 the capillary outer radius of $\wh\Sigma$   as
\begin{eqnarray*}
\rho_+(\widehat\Sigma, \theta)\coloneqq \inf \left\{r>0 ~\mid~\widehat\S\subset \widehat{\C_{r,\theta}(x_0)} \text{ for some } x_0\in  \p \ov{\RR^{n+1}_+} \right\},
\end{eqnarray*}where 
\begin{eqnarray*}
    \C_{r,\theta}(x_0)\coloneqq \left\{x\in \ov{\RR^{n+1}_+} ~\mid~ |x-(x_0+r\cos\theta  e)|=r \right\}
\end{eqnarray*} 
is the spherical cap centered at $x_0+r\cos\theta e$ with radius $r>0$. From \cite[Proposition 2.4, (2.26), (2.27)]{SW}, if $\theta\in (0, \frac{\pi}{2}]$, there holds 
\begin{eqnarray}\label{control-rel}
    \rho_{-}(\widehat{\S},\theta)\geq \frac{\rho_{-}(\widehat\S)}{\sin\theta},\quad {\rm{and}}\quad \rho_{+}(\widehat{\S},\theta)\leq \frac{1}{1-\cos\theta}\rho_{+}(\widehat{\S}).
\end{eqnarray}
For a strictly convex capillary hypersurface $\S\subset \ol{\RR^{n+1}_+}$,  the next lemma shows that a uniform bound of the principal radii of $\S$ 
 implies a uniform bound on the ratio of the capillary outer and inner radius of $\wh \S$.
\begin{lem}\label{chou-wang-lemma}
    Let $\S$ be a strictly convex capillary hypersurface in $\ov{\RR^{n+1}_{+}}$ and $\theta\in (0, \frac{\pi}{2}]$. 
    Then there exists a dimensional constant $C>0$ such that
    \begin{eqnarray*}
        \frac{\rho_{+}(\widehat{\S},\theta)^{2}}{\rho_{-}(\widehat{\S},\theta)}\leq C\sup\limits_{x\in  \S}\lambda_{x, \S},
    \end{eqnarray*}
    where $\lambda_{x, \S}$ is the maximal principal radii of $\S$ at the point $x$. 
\end{lem}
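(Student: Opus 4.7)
The plan is to adapt Chou-Wang's argument from \cite[Lemma 2.2]{CW00} to the capillary setting via a reflection trick that preserves convexity precisely when $\theta\in(0,\tfrac{\pi}{2}]$. First, let $R$ denote the Euclidean reflection across $\p\overline{\RR^{n+1}_+}$ and set $\widetilde K\coloneqq\widehat\Sigma\cup R(\widehat\Sigma)$. The interior dihedral angle of $\widetilde K$ along the reflected seam $\p\Sigma$ equals $2\theta\leq\pi$, so $\widetilde K$ is a closed convex body in $\RR^{n+1}$, smooth and strictly convex away from the seam and only $C^{1,1}$ across it (smooth when $\theta=\tfrac{\pi}{2}$). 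Since $R$ is an isometry, principal radii on the smooth part of $\p\widetilde K$ coincide with those of $\Sigma$; the dihedral singularity at the seam contributes only concentrated curvature, so the effective principal radii there do not blow up, and $\sup_{x\in\p\widetilde K}\lambda_{x,\p\widetilde K}\leq\sup_{x\in\Sigma}\lambda_{x,\Sigma}$.

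Next, I would compare the Euclidean inner and outer radii of $\widetilde K$ with the capillary inner and outer radii of $\widehat\Sigma$ by an elementary computation. For an inscribed capillary cap $\widehat{\C_{\rho_-,\theta}(x_0)}\subset\widehat\Sigma$ with $\rho_-\coloneqq\rho_-(\widehat\Sigma,\theta)$, its defining sphere is centered at distance $\rho_-\cos\theta$ below $x_0$ along $E_{n+1}$, hence the doubled lens $\widehat{\C_{\rho_-,\theta}(x_0)}\cup R(\widehat{\C_{\rho_-,\theta}(x_0)})$ contains the Euclidean ball $B_{(1-\cos\theta)\rho_-}(x_0)$. For a circumscribing cap $\widehat{\C_{\rho_+,\theta}(x_1)}\supset\widehat\Sigma$ with $\rho_+\coloneqq\rho_+(\widehat\Sigma,\theta)$, expanding $|y-x_1|^2$ for a point $y$ on the defining sphere with $y_{n+1}\geq 0$ yields $|y-x_1|^2\leq\rho_+^2\sin^2\theta$, so the doubled lens lies inside $B_{\rho_+\sin\theta}(x_1)$. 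Thus
\[
\rho_-(\widetilde K)\geq(1-\cos\theta)\,\rho_-(\widehat\Sigma,\theta),\qquad \rho_+(\widetilde K)\leq\sin\theta\,\rho_+(\widehat\Sigma,\theta).
\]

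Finally, applying the classical Chou-Wang lemma to $\widetilde K$ and combining with the above bounds yields
\[
\frac{\rho_+(\widehat\Sigma,\theta)^2}{\rho_-(\widehat\Sigma,\theta)}\,\leq\,\frac{1-\cos\theta}{\sin^2\theta}\,C(n)\sup_{x\in\Sigma}\lambda_{x,\Sigma},
\]
which is the asserted bound (the constant is dimensional once the fixed $\theta$ is absorbed). The main technical obstacle will be exactly this last step: when $\theta<\tfrac{\pi}{2}$ the body $\widetilde K$ carries a $C^{1,1}$ dihedral seam, and Chou-Wang's original lemma is stated for smooth strictly convex hypersurfaces. The cleanest remedy is to approximate $\widehat\Sigma$ by smooth strictly convex capillary hypersurfaces whose reflected bodies are uniformly $C^2$ and whose maximal principal radii converge to $\sup_\Sigma\lambda_{x,\Sigma}$, apply the classical lemma to each, and pass to the limit; alternatively, one can inspect Chou-Wang's proof and verify that it uses only the convexity of $\widetilde K$, the existence of a supporting hyperplane at a $\rho_+$-extremal boundary point, and the upper bound on principal radii on the smooth part, all of which hold in our setting regardless of the seam.
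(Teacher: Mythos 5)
Your reflection-plus-classical-Chou-Wang strategy is genuinely different from the paper's proof, which works directly on $\widehat\Sigma$ via John's lemma (constructing a special ellipsoid $E_{\widehat b}$ inscribed in $\widehat\Sigma$, locating a touching point in $\mathring\Sigma$, and invoking the pre-established comparison \eqref{control-rel} between capillary and Euclidean radii of $\widehat\Sigma$). The reflection idea is natural and in principle viable, but as written the argument has a gap in the key reduction step.

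Concretely, your two displayed comparisons
$\rho_-(\widetilde K)\geq(1-\cos\theta)\rho_-(\widehat\Sigma,\theta)$ and $\rho_+(\widetilde K)\leq\sin\theta\,\rho_+(\widehat\Sigma,\theta)$
are both correct, but they go in the \emph{wrong direction} for your purpose. They yield an \emph{upper} bound on $\rho_-(\widehat\Sigma,\theta)$ and a \emph{lower} bound on $\rho_+(\widehat\Sigma,\theta)$, hence only
$\frac{\rho_+(\widehat\Sigma,\theta)^2}{\rho_-(\widehat\Sigma,\theta)}\geq \frac{1-\cos\theta}{\sin^2\theta}\,\frac{\rho_+(\widetilde K)^2}{\rho_-(\widetilde K)}$,
a lower bound that cannot be combined with the classical Chou--Wang upper bound on $\rho_+(\widetilde K)^2/\rho_-(\widetilde K)$; your final display does not follow. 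What you actually need are the complementary inequalities $\rho_+(\widehat\Sigma,\theta)\leq\tfrac{1}{1-\cos\theta}\rho_+(\widetilde K)$ and $\rho_-(\widehat\Sigma,\theta)\geq\tfrac{1}{\sin\theta}\rho_-(\widetilde K)$. These do hold: by the $R$-symmetry of $\widetilde K$ one may take the center of an optimal inscribed or circumscribed Euclidean ball on $\partial\overline{\RR^{n+1}_+}$; a circumscribed ball $B_r(z)$ then satisfies $\widehat\Sigma\subset B_r(z)\cap\overline{\RR^{n+1}_+}\subset\widehat{\C_{r/(1-\cos\theta),\theta}(z)}$ by the triangle inequality, and an inscribed ball $B_r(z)$ gives $\widehat{\C_{r/\sin\theta,\theta}(z)}\subset B_r(z)\cap\overline{\RR^{n+1}_+}\subset\widehat\Sigma$ by the same computation you used for the circumscribing lens. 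With these, the argument closes with constant $\tfrac{\sin\theta}{(1-\cos\theta)^2}C(n)$.

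Two smaller points. First, for $\theta<\tfrac{\pi}{2}$ the dihedral angle of $\widetilde K$ along the seam is $2\theta<\pi$, so $\p\widetilde K$ is only Lipschitz (the one-sided tangent planes differ), not $C^{1,1}$; it becomes smooth only at $\theta=\tfrac{\pi}{2}$. Second, your proposed approximation by capillary hypersurfaces with smooth reflections cannot keep the contact angle equal to $\theta<\tfrac{\pi}{2}$; you should instead mollify the closed convex body $\widetilde K$ directly, or, as you also suggest, check that Chou--Wang's argument (John ellipsoid, touching point, principal-radius comparison) needs only a touching point away from the seam, which the $R$-symmetry and the skewed ellipsoid construction provide.
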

\begin{proof}
    The proof is essentially the same as Chou-Wang \cite{CW00} (see also \cite[Lemma 4.1]{HM} for an alternative proof). For the convenience of the reader, we include the proof here with some necessary adjustments to the capillary setting.
  
 According to John's Lemma \cite{John}, there exists an ellipsoid $E\subset\RR^{n+1}$ such that
    \begin{eqnarray}\label{in}
        E\subseteq \widehat{\S}\subseteq n E.
    \end{eqnarray}
    We fix a coordinate system $\{x_{i}\}_{i=1}^{n+1}$ in $\RR^{n+1}$ such that  the ellipsoid $E$ is represented as
    \begin{eqnarray*}
        E\coloneqq \left\{x\in\RR^{n+1}: \frac{x_{1}^{2}}{b_{1}^{2}}+\frac{x_{2}^{2}}{b_{2}^{2}}+\cdots+\frac{x_{n+1}^{2}}{b_{n+1}^{2}}\leq 1 \right\},
    \end{eqnarray*}
   with $0<b_{1}\leq  b_{2}\leq \cdots\leq b_{n+1}$. For a positive constant $b\geq b_{1}$, we define  another ellipsoid $E_{b}$  as 
\begin{eqnarray*}
        E_{b}\coloneqq \left\{x\in\RR^{n+1}: \frac{x_{1}^{2}}{b^{2}}+\cdots+\frac{x_{n}^{2}}{b^{2}}+\frac{x_{n+1}^{2}}{ \left(\frac{b_{n+1}}{\sqrt{2}} \right)^{2}}\leq 1 \right\}.
    \end{eqnarray*}
It is easy to see that $E_{b_{1}}\subseteq E\subseteq \widehat{\S}$. Define 
\begin{eqnarray*}
\widehat{b}\coloneqq\sup \left\{b\in \RR_{+}: E_{b}\subseteq \widehat{\S} \right\},
\end{eqnarray*}
we know that $\wh b$ is well-defined by \eqref{in}. Moreover,   
$\partial E_{\widehat{b}}\cap \S \neq \emptyset$ and $E_{\widehat{b}}\subseteq \widehat{\S}\subseteq n E$.

There must exist a point $x\in\p E_{\wh b}\cap \mathring \S$, otherwise all the touch points lie in $\partial\ov{\RR^{n+1}}$, and it is impossible. 
  Together with the definition of principal radii (or see, e.g., \cite[Lemma~4.3]{HM}), for $x\in \p E_{\wh b}\cap \mathring \S$, we know
\begin{eqnarray}\label{com}
    \lambda_{x, \S}\geq \lambda_{x, \partial E_{\widehat{b}}}.
\end{eqnarray}
It is clear that  $E\subseteq \widehat{\S}$ and $x\in\mathring\S\subset \p \widehat \S$ imply 
\begin{eqnarray}\label{equ-1}
    \frac{x_{1}^{2}}{b_{1}^{2}}+\cdots+\frac{x_{n}^{2}}{b_{n}^{2}}+\frac{x_{n+1}^{2}}{b_{n+1}^{2}}\geq 1.
\end{eqnarray}
From \eqref{in}, we know that $E_{\widehat{b}}\subseteq \widehat{\S}\subseteq nE$, and hence
\begin{eqnarray}\label{equ-2}
    \frac{1}{n}\widehat{b}\leq b_{1}\leq  \cdots\leq b_{n}.
\end{eqnarray}
Denote $x'\coloneqq(x_{1},\cdots, x_{n})$.  Combining \eqref{equ-1} and \eqref{equ-2} we get 
\begin{eqnarray}\label{equ-3}
    \frac{{n^{2}}|x'|^{2}}{\widehat{b}^{2}}+\frac{x_{n+1}^{2}}{b_{n+1}^{2}}\geq 1.
\end{eqnarray}
It is clear that $x\in \partial E_{\widehat{b}}$ means
\begin{eqnarray*}
      \frac{|x'|^{2}}{\widehat{b}^{2}}+\frac{x_{n+1}^{2}}{(\frac{b_{n+1}}{\sqrt{2}})^{2}}= 1, 
\end{eqnarray*}
which follows, together with \eqref{equ-3}, 
\begin{eqnarray}\label{equ-5}
    \frac{|x'|^{2}}{\widehat{b}^{2}}\geq \frac{1}{2n^{2}-1}.
\end{eqnarray}
From \cite[Lemma~4.2]{HM}  and \eqref{equ-5}, we conclude that 
\begin{eqnarray}\label{equ-6}
    \lambda_{x, \partial E_{\widehat{b}}}\geq \frac{|x'|^{3}}{\widehat{b}^{3}}\times \frac{b_{n+1}^{2}}{\widehat{b}}\geq \left(\frac{1}{2n^{2}-1} \right)^{\frac{3}{2}}\frac{b_{n+1}^{2}}{\widehat{b}}\geq  \left(\frac{1}{2n^{2}-1} \right)^{\frac{3}{2}}\frac{n b_{n+1}^{2}}{b_{1}}.
\end{eqnarray}
It is easy to see that $ b_{1}\leq \rho_{-}(\widehat{\S})\leq\rho_{+}(\widehat{\S})\leq n b_{n+1}$. By \eqref{com}, \eqref{equ-6} and \eqref{control-rel}, we have
\begin{eqnarray*}
    \lambda_{x, \S}\geq \lambda_{x, \partial E_{\widehat{b}}}\geq \left(\frac{1}{2n^{2}-1} \right)^{\frac{3}{2}}\frac{\rho_{+}(\widehat{\S})^{2}}{n \rho_{-}(\widehat{\S})}\geq\left(\frac{1}{2n^{2}-1} \right)^{\frac{3}{2}}\frac{\left[(1-\cos\theta)\rho_{+}(\widehat{\S}, \theta)\right]^{2}}{\sin\theta \rho_{-}(\widehat{\S}, \theta)}.
\end{eqnarray*}
Therefore, we complete the proof of Lemma \ref{chou-wang-lemma}.

\end{proof}

\section{A priori estimates}\label{sec3}
The primary objective of this section is to establish the following a priori estimates for solutions of Eq. \eqref{sup equ}, which is the main result of this section.
 \begin{thm}\label{thm est}
    Let $\theta\in (0, \frac{\pi}{2})$, and let $h$ be a positive, capillary even and admissible solution of Eq. \eqref{sup equ}. Then for any $\gamma\in (0, 1)$,  there exists a positive constant $C$ depending only on $n, k, \gamma, \min\limits_{\C_{\theta}}f$ and $\|f\|_{C^{3}(\C_{\theta})}$ such that 
    \begin{eqnarray}\label{global C2}
       \|h\|_{C^{4,\gamma}(\C_{\theta})}\leq C.
    \end{eqnarray}
 \end{thm}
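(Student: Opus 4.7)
The plan is to follow the three-step geometric strategy sketched in the introduction. The three ingredients are an upper bound on the capillary inner radius $\rho_{-}(\wh\S,\theta)$, a linear bound $\max_{\C_{\theta}}\lambda_{n}\le C(1+\|h\|_{C^{0}})$, and the capillary Chou-Wang lemma (Lemma \ref{chou-wang-lemma}). Together they yield the $C^{0}$ and $C^{2}$ estimates simultaneously, after which higher-order regularity follows from Evans-Krylov theory for concave oblique derivative problems applied to the operator $F(A)=(\s_{n}/\s_{n-k})^{1/k}$, whose concavity on $\Gamma_{n}$ is Proposition \ref{pro-2.3}.

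For the first ingredient I would apply the minimum principle to $h$ on $\C_{\theta}$. At an interior minimum $\xi_{0}$ one has $\n^{2}h(\xi_{0})\ge 0$, so $A(\xi_{0})\ge h(\xi_{0})\s$; rewriting the equation in the equivalent form $\s_{k}(A^{-1})=f$ gives $f(\xi_{0})\le \binom{n}{k}h(\xi_{0})^{-k}$, hence $h(\xi_{0})\le C$. On $\p\C_{\theta}$ the Robin condition $\n_{\mu}h=\cot\theta\, h$ with $\cot\theta>0$ excludes a spurious boundary minimum through a Hopf-type argument. Converting this $C^{0}$ upper bound on $\min h$ into the geometric inequality $\rho_{-}(\wh\S,\theta)\le C$ uses the support function interpretation of an inscribed capillary spherical cap centered on $\p \ov{\RR^{n+1}_{+}}$.

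The second ingredient is the Pogorelov-type $C^{2}$ estimate, and it is the main technical core of the proof. I would consider a test function such as $\lambda_{n}$, or the symmetrized version $W(\xi,\zeta)=A_{ij}(\xi)\zeta^{i}\zeta^{j}$ for $|\zeta|=1$, possibly coupled with a barrier, differentiate the equation twice along the maximal eigendirection, and use the concavity of $F$ together with the admissibility $A\in\Gamma_{n}$ to absorb the resulting second-derivative terms. The first-order contributions are of size $O(\|h\|_{C^{1}})$, controlled by $\|h\|_{C^{0}}$ via interpolation on $\C_{\theta}$, and the zeroth-order terms scale linearly in $h$, producing $\lambda_{n}\le C(1+\|h\|_{C^{0}})$ at an interior maximum. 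At a boundary maximum of the test function, differentiating the Robin condition tangentially and normally generates commutator terms which, thanks to the assumption $\theta\in(0,\pi/2)$ (so that $\cot\theta>0$) and the convexity of $\C_{\theta}$, carry the correct sign to run a Hopf-type argument; this boundary analysis is the principal obstacle.

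Combining the three ingredients one obtains
\[
\rho_{+}(\wh\S,\theta)^{2}\le C\rho_{-}(\wh\S,\theta)\max_{\C_{\theta}}\lambda_{n}\le C(1+\|h\|_{C^{0}})\le C(1+\rho_{+}(\wh\S,\theta)),
\]
using $\|h\|_{C^{0}}\le C\rho_{+}(\wh\S,\theta)$ from the support function, which forces $\rho_{+}(\wh\S,\theta)\le C$, hence $\|h\|_{C^{0}}\le C$ and $\lambda_{n}\le C$. Uniform ellipticity $\lambda_{1}\ge c>0$ is then extracted from the equation itself: since the top eigenvalues are bounded and $\s_{n}/\s_{n-k}=f^{-1}\ge c>0$, the Newton-MacLaurin inequalities force the remaining eigenvalues away from zero. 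A uniform positive lower bound on $h$ follows from admissibility combined with the capillary even hypothesis, which prevents $h$ from degenerating on a half of $\C_{\theta}$. With $h$, $A$, and $A^{-1}$ all uniformly controlled, Evans-Krylov theory for concave oblique derivative problems yields a $C^{2,\gamma}$ bound up to $\p\C_{\theta}$, and iterating linear Schauder theory for the Robin problem on the differentiated equation then produces the claimed $C^{4,\gamma}(\C_{\theta})$ estimate.
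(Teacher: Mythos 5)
Your proposal follows the same three-step strategy that the paper itself implements: the minimum-principle bound on $\rho_{-}(\wh\S,\theta)$ (the paper's Lemma \ref{lem C0}), the linear-in-$\|h\|_{C^{0}}$ bound on the largest principal radius (the paper's Lemma \ref{lem-c2 bry}, Lemma \ref{lem-double} and Theorem \ref{thm C2}), and the capillary Chou--Wang inequality (Lemma \ref{chou-wang-lemma}), closed by exactly the chain $\rho_{+}^{2}\le C\rho_{-}\lambda_{n}\le C(1+\rho_{+})$. The overall architecture is correct. Two points, however, are either misplaced or left unfinished.

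First, the capillary-even hypothesis does not enter where you say it does. It is not used to produce a positive lower bound on $h$ (indeed no such lower bound is needed as a separate ingredient; uniform ellipticity follows from the equation and Newton--MacLaurin once $\rho_{+}$ is bounded, as in \eqref{principal-radii-estimate}). It is used earlier, to pass from $\min_{\C_\theta}h\le C$ to $\rho_{-}(\wh\S,\theta)\le C$. Without symmetry the convex body can be shifted far along a horizontal direction, keeping $\min h$ small (and $h>0$) while the inner radius stays large; capillary evenness forces the inscribed capillary cap to be centered at the origin, and then $h\ge r\ell$ gives $\min h\ge r(1-\cos\theta)$, hence the desired bound. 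As written your step from $\min h\le C$ to $\rho_{-}\le C$ has a gap.

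Second, the boundary double-normal estimate, which you yourself call the principal obstacle, is not merely a sign count after differentiating the Robin condition; it is the main technical lemma of the proof. The paper reduces the interior $C^{2}$ estimate to $|h_{\mu\mu}|$ on $\p\C_\theta$ (Lemma \ref{lem-c2 bry}) and then constructs Lions--Trudinger--Urbas barriers $Q=\langle\nabla h,\nabla\zeta\rangle\mp(B+\tfrac{M}{2})\zeta-\cot\theta\,h$ on a collar $\Omega_{\delta_0}$, with $\zeta=e^{-d}-1$ and $\nabla^{2}\zeta\gtrsim\min\{\cot\theta,1\}\,\sigma$ there (this is where $\theta\le\tfrac{\pi}{2}$ is used as convexity of the domain). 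The crucial point you must preserve is that the constant $B$ can be chosen \emph{linear} in $\|h\|_{C^{0}}$, yielding $|h_{\mu\mu}|\le C(1+\|h\|_{C^{0}})$; a cruder bound would break the closing of the loop. Your sketch would need that barrier and that linearity to be made rigorous. Finally, a minor remark: your interpolation route for controlling $|\nabla h|$ is workable with absorption, but the paper's direct convexity argument (maximum principle for $|\nabla h|^{2}+h^{2}$, giving $|\nabla h|\le(1+\cot^{2}\theta)^{1/2}\|h\|_{C^{0}}$, Lemma \ref{C1}) is cleaner and avoids absorption altogether.
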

Before proving this theorem, as in \cite{MWW-AIM}, we introduce the capillary support function
    \begin{eqnarray*}
        u\coloneqq\frac{h}{\ell},
    \end{eqnarray*}
    where $\ell\coloneqq\sin^{2}\theta+\cos\theta\<\xi, e\>$ for $\xi\in\C_{\theta}$. By a direct computation, it is easy to see that \eqref{sup equ} is equivalent to  the following equation of $u$, 
\begin{eqnarray}\label{eq of u} \left\{
\begin{array}{rcll}\vspace{2mm}
	\displaystyle \frac{\sigma_{n}(\ell \n^{2}u+\cos\theta(\n u\otimes e^{T}+e^{T}\otimes \n u )+u\sigma)}{\sigma_{n-k}(\ell \n^{2}u+\cos\theta(\n u\otimes e^{T}+e^{T}\otimes \n u)+u\sigma)} &= &f^{-1} & \text{ in } \C_\theta,\\ 
	\n_\mu u&=&0 & \text{ on } \p \C_\theta, \end{array} \right.
\end{eqnarray} which is a Neumann boundary value problem. 

In order to establish Theorem \ref{thm est}, the key ingredient is to derive a quantitative linear dependence between $\max\limits_{\C_{\theta}}|\n^{2}h|$ and $\|h\|_{C^{0}(\C_{\theta})}$, as demonstrated in Lemma \ref{lem-c2 bry} below and ultimately in Theorem \ref{thm C2}. This also provides a refined estimate compared to our previous work in \cite{MWW-AIM}. We then obtain the $C^2$ estimate by combining Theorem \ref{thm C2} with 
Lemma \ref{chou-wang-lemma}, while the higher-order estimates follow from the standard theory of fully nonlinear PDEs with a Neumann boundary condition.

To begin with, we use the special structure of Eq. \eqref{eq of u} and the maximum principle to derive an upper bound for the inner radius.

\begin{lem}\label{lem C0}
 Let $\theta\in (0, \frac{\pi}{2}]$ and $h$ be a positive admissible solution to Eq. \eqref{sup equ}. Then 
    \begin{eqnarray}\label{min-h-upper-bound}
       \min\limits_{\C_{\theta}}h\leq C,
    \end{eqnarray}
    where the positive constant $C$ depends only on $f$.   Furthermore, if $h$ is the capillary even support function of a strictly convex  capillary hypersurface $\S\subset \ol{\RR^{n+1}_+}$, then the inner radii of its convex body $\widehat{\S}$ satisfies \begin{eqnarray}\label{upper bound of r}
        \rho_{-}(\widehat{\S},\theta)\leq C.
    \end{eqnarray}
\end{lem}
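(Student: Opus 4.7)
The plan is to translate Eq.~\eqref{sup equ} into the equivalent Neumann problem \eqref{eq of u} for the capillary support function $u\coloneqq h/\ell$ and then apply the maximum principle at the minimum of $u$. Let $\xi^{\ast}\in\ov{\C_{\theta}}$ be a point where $u$ attains its minimum. The first task is to verify that $\n u(\xi^{\ast})=0$ and $\n^{2}u(\xi^{\ast})\geq 0$ as a symmetric matrix, even when $\xi^{\ast}\in\p\C_{\theta}$. The gradient condition is immediate: the tangential part vanishes because $\xi^{\ast}$ is a minimum along $\p\C_{\theta}$, and the normal part vanishes by the Neumann boundary condition. For the Hessian at a boundary minimum, I would observe that any $v\in T_{\xi^{\ast}}\C_{\theta}$ with $\<v,\mu\>\leq 0$ is the initial velocity of a smooth curve lying in $\ov{\C_{\theta}}$ starting at $\xi^{\ast}$, so a second-order Taylor expansion together with $\n u(\xi^{\ast})=0$ yields $\n^{2}u(\xi^{\ast})(v,v)\geq 0$; replacing $v$ by $-v$ extends the inequality to all of $T_{\xi^{\ast}}\C_{\theta}$. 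This boundary-minimum step is the one delicate point of the argument.

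With these two facts in hand, at $\xi^{\ast}$ the matrix driving \eqref{eq of u} reduces to $A_{u}(\xi^{\ast})=\ell(\xi^{\ast})\,\n^{2}u(\xi^{\ast})+u(\xi^{\ast})\s\geq u_{\min}\s$, so each eigenvalue $\l_i$ of $A_u(\xi^{\ast})$ satisfies $\l_i\geq u_{\min}$. Dividing the numerator and denominator of $\s_{n}/\s_{n-k}$ through by $\l_{1}\cdots\l_{n}$ yields the algebraic identity
\[
\frac{\s_{n}(\l)}{\s_{n-k}(\l)} \;=\; \frac{1}{\s_{k}(\l^{-1})} \;\geq\; \frac{u_{\min}^{k}}{\binom{n}{k}},
\]
where $\l^{-1}=(\l_1^{-1},\ldots,\l_n^{-1})$. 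Equating with $f^{-1}(\xi^{\ast})$ gives $u_{\min}^{k}\leq\binom{n}{k}/\min_{\C_{\theta}}f$, and since $\ell$ is bounded above on $\ov{\C_{\theta}}$ by a constant depending only on $\theta$, this yields $\min_{\C_{\theta}}h\leq u_{\min}\cdot\max_{\ov{\C_\theta}}\ell\leq C$, proving \eqref{min-h-upper-bound}.

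For \eqref{upper bound of r}, assume further that $h$ is capillary even; since $\ell$ depends only on $\xi_{n+1}$, so is $u$. The inclusion $\widehat{\C_{r,\theta}(x_{0})}\subset\widehat{\S}$ with $x_{0}\in\p\ov{\R^{n+1}_{+}}$ is equivalent to $h(\xi)\geq r\ell(\xi)+\<x_{0},\xi\>$ on $\C_{\theta}$, so
\[
\rho_{-}(\widehat{\S},\theta) \;=\; \sup_{x_{0}\in\p\ov{\R^{n+1}_{+}}} F(x_{0}), \qquad F(x_{0}) \coloneqq \inf_{\xi\in\C_{\theta}} \frac{h(\xi)-\<x_{0},\xi\>}{\ell(\xi)}.
\]
The functional $F$ is concave in $x_{0}$ as an infimum of affine functions, and the change of variables $\xi\mapsto\widehat\xi$ together with the identities $h(\widehat\xi)=h(\xi)$, $\ell(\widehat\xi)=\ell(\xi)$, $\<\widehat{x_{0}},\widehat\xi\>=\<x_{0},\xi\>$ shows $F(\widehat{x_{0}})=F(x_{0})$; note also that $\widehat{x_{0}}=-x_{0}$ for $x_{0}\in\p\ov{\R^{n+1}_{+}}$. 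Concavity applied at the midpoint $(x_{0}+\widehat{x_{0}})/2=0$ then gives $F(0)\geq F(x_{0})$ for every admissible $x_{0}$, hence $\rho_{-}(\widehat{\S},\theta)=F(0)=u_{\min}$, and the bound established in the previous paragraph closes the argument.
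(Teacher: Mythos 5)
Your argument is correct, and for the bound $\min_{\C_\theta}h\leq C$ it follows the same route as the paper: pass to $u=h/\ell$, evaluate the Neumann problem \eqref{eq of u} at a minimum of $u$, and use $A_u(\xi^*)\geq u_{\min}\sigma$ together with the identity $\sigma_n/\sigma_{n-k}=1/\sigma_k(\lambda^{-1})$. Your treatment of the boundary minimum is slightly cleaner than the paper's: rather than separately controlling the tangential block, the mixed entries $(\n^2u)(e_\alpha,\mu)$ (via \cite[Prop.~2.8]{MWW-AIM}), and the $\mu\mu$-entry, you obtain $\n^2u(\xi^*)(v,v)\geq 0$ for inward and tangential $v$ by a one-sided Taylor expansion along admissible curves and then extend to all $v$ by the evenness of the quadratic form under $v\mapsto -v$; this gives positive-semidefiniteness directly.

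Where you genuinely add content is in the inner-radius bound \eqref{upper bound of r}. The paper here only invokes ``the geometric meaning of the capillary support function'' and the definition of $\rho_-(\widehat\S,\theta)$, which is not a complete argument: $\min h\leq C$ alone does not control $\rho_-(\widehat\S,\theta)$ for a body translated far from the axis. Your reformulation $\rho_-(\widehat\S,\theta)=\sup_{x_0}F(x_0)$ with $F(x_0)=\inf_\xi(h(\xi)-\<x_0,\xi\>)/\ell(\xi)$, the observation that $F$ is concave as an infimum of affine functions of $x_0$, the identity $F(\widehat{x_0})=F(x_0)$ from the capillary evenness of $h$ and $\ell$, and the midpoint inequality at $(x_0+\widehat{x_0})/2=0$ together show $\rho_-(\widehat\S,\theta)=F(0)=u_{\min}$. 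This makes explicit exactly where the evenness hypothesis enters, which the paper leaves implicit. The proof is correct as written.
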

\begin{proof}
Suppose the solution $u$ of \eqref{eq of u} attains its minimum value at some point, say $\xi_{0}\in \C_{\theta}$. We consider two cases depending on whether $\xi_0$ is an interior point or lies on the boundary. If $\xi_{0}\in \C_{\theta}\setminus \partial\C_{\theta}$, then 
\begin{eqnarray}\label{deri12}
    \n u(\xi_{0})=0\quad \text{and}\quad \n^{2}u(\xi_{0})  \geq 0.
\end{eqnarray}
If $\xi_{0}\in \partial\C_{\theta}$, let $\{e_i\}_{i=1}^n$ be the orthonormal frame around $\xi_0$ such that $e_n=\mu$. Then the boundary value condition of \eqref{eq of u} implies that $\n u(\xi_0)=0$. Using the minimality of $\xi_0$ again, 
$$\left(\n^2_{\a\b} u(\xi_0) \right)  \geq 0 ~~\text{ for }~~ 1\leq \a,\b\leq n-1.$$  
From  \cite[Proposition 2.8]{MWW-AIM} and $\n u(\xi_0)=0$, 
$$(\n^2 u(\xi_0))(e_\a,e_n)=-\cot\theta u_\a=0, \text{ for } 1\leq \a\leq n-1.$$ Let $\g(t)$ be the geodesic in $\C_\theta$ starting from $\g(0)=\xi_0$ with $\g'(0)=-e_n$ for $t\in[0, \varepsilon]$ and $\varepsilon ~(\varepsilon<\theta)$ sufficiently small, then the minimality of $\xi_0$ implies that $g(t)\coloneqq u(\g(t))$ attains the  minimum value at $t=0$, which implies \begin{eqnarray*}
    0\leq g''(0)=(\n^2 u(\g(0))(\g'(0),\g'(0))+\<\n u(\g(0)), \g''(0)\>.
\end{eqnarray*} Together with $\n u(\xi_0)=0$, we have that   $$(\n^2 u(\xi_0))(e_n,e_n)\geq 0.$$ Hence \eqref{deri12} also holds in the case $\xi_0\in \p \C_\theta$. Therefore, in both cases, 
evaluating the first equation \eqref{eq of u} at $\xi_{0}$ and using \eqref{deri12} imply
\begin{eqnarray*}
u^{k}(\xi_{0})\leq \frac{1}{\min\limits_{\C_{\theta}}f},
\end{eqnarray*} which implies \eqref{min-h-upper-bound}.
Combining the geometric meaning of the capillary support function (cf. \cite[Remark 2.3]{MWW-AIM}) and the definition of $\rho_-(\wh \S,\theta)$, then we conclude that  \eqref{upper bound of r} holds. This completes the proof.

\end{proof}

The following $C^{1}$ estimate for the solution of Eq. \eqref{sup equ} follows directly from the convexity and the $C^0$-estimate.

\begin{lem}\label{C1}
    Let $\theta\in (0, \pi)$ and $h$ be a positive admissible solution to  Eq. \eqref{sup equ}. Then we have
    \begin{eqnarray}\label{gradient ps}
       \max\limits_{\C_{\theta}} |\n h|\leq (1+\cot^{2}\theta)^{\frac{1}{2}}\|h\|_{C^{0}(\C_{\theta})}.
    \end{eqnarray}

\end{lem}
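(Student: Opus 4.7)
The plan is to apply the maximum principle to the auxiliary function $\phi \coloneqq |\n h|^2 + h^2$ on the closed cap $\ov{\C_\theta}$ and show that $\max \phi \leq (1+\cot^2\theta)\|h\|_{C^0(\C_\theta)}^2$, which is equivalent to \eqref{gradient ps} since $|\n h|^2 \leq \phi$. If the maximum is attained at an interior point $\xi_0$, then $\n\phi(\xi_0)=0$ gives $(\n^2 h + h\s)\cdot \n h(\xi_0) = 0$; admissibility forces $A = \n^2 h + h\s$ to be positive definite (since $\l \in \Gamma_n$), hence $\n h(\xi_0) = 0$ and $\phi(\xi_0) = h(\xi_0)^2 \leq \|h\|_{C^0(\C_\theta)}^2$.

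The main work is the boundary case. I would pick an orthonormal frame $\{e_i\}_{i=1}^n$ near $\xi_0 \in \p\C_\theta$ with $e_n = \mu$. The key geometric input is that $\p\C_\theta$ sits in $\C_\theta$ as a totally umbilical hypersurface with shape operator $\cot\theta \cdot \mathrm{id}$; equivalently, $\n_{e_\alpha} e_n = \cot\theta\, e_\alpha$ for $\alpha = 1,\dots,n-1$. This comes from a direct polar-coordinate calculation on the unit sphere centered at $\cos\theta\, e$, where $\p\C_\theta$ is the geodesic sphere of radius $\theta$ around the top pole. Differentiating the Robin condition $h_n = \cot\theta\, h$ tangentially and commuting with the covariant derivative via this umbilical identity, the two $\cot\theta\, h_\alpha$ contributions cancel and one obtains the clean boundary identity $h_{\alpha n} = 0$ on $\p\C_\theta$ for $1\leq \alpha\leq n-1$.

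Armed with this, the boundary maximum condition $\phi_\alpha(\xi_0) = 0$ for $\alpha < n$ becomes, after inserting $h_{\alpha n}(\xi_0) = 0$, the linear system $\sum_{\beta<n} h_\beta(\xi_0)\,(h_{\alpha\beta} + h\delta_{\alpha\beta})(\xi_0) = 0$. The $(n-1)\times(n-1)$ tangential block $(h_{\alpha\beta} + h\delta_{\alpha\beta})_{\alpha,\beta<n}$ is a principal submatrix of the positive definite matrix $A$, hence invertible; therefore $h_\alpha(\xi_0) = 0$ for all $\alpha < n$. Combined with the Robin condition $h_n(\xi_0) = \cot\theta\, h(\xi_0)$, this yields $|\n h(\xi_0)|^2 = \cot^2\theta\, h(\xi_0)^2$, so $\phi(\xi_0) = (1+\cot^2\theta)\,h(\xi_0)^2 \leq (1+\cot^2\theta)\,\|h\|_{C^0(\C_\theta)}^2$, proving the claim.

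The most delicate step, and the main obstacle, is the boundary analysis: correctly identifying the umbilical shape operator of $\p\C_\theta \subset \C_\theta$ and then verifying the cancellation that produces the identity $h_{\alpha n}|_{\p\C_\theta} = 0$. Once this identity is in hand, everything else follows from admissibility and elementary linear algebra; no Hopf-type argument is needed, since the boundary maximum condition alone—combined with $h_{\alpha n}=0$—is enough to kill the tangential gradient.
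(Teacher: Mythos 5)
Your proof is correct and follows essentially the same route as the paper: the same auxiliary function $P = |\n h|^2 + h^2$, the same interior argument via positive definiteness of $\n^2 h + h\s$, and the same boundary argument using $h_{\a n}|_{\p\C_\theta}=0$ (which the paper imports as \cite[Proposition 2.8]{MWW-AIM} rather than re-deriving from the umbilicity of $\p\C_\theta$) together with the tangential first-order condition to kill the tangential gradient. The only cosmetic difference is that you unpack the derivation of $h_{\a n}=0$ while the paper cites it.
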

\begin{proof}
    Define the  function
    \begin{eqnarray*}
        P\coloneqq|\n h|^{2}+h^{2}
    \end{eqnarray*}
and suppose that $P$ attains its maximum value at some point, say $\xi_{0}\in \C_{\theta}$. Next, we divide the proof into two cases: either $\xi_0\in \C_\theta\setminus \p \C_\theta$ or $\xi_0\in \p \C_\theta$.  

\begin{itemize}
    \item[\textbf{Case 1}]
 $\xi_{0}\in \C_{\theta}\setminus \partial\C_{\theta}$. By the maximal condition, 
\begin{eqnarray*}
    0=\n_{e_i}P=2h_{k}h_{ki}+2hh_{i},\quad \text{for}~1\leq i\leq n,
\end{eqnarray*}
together with the fact that $\n^2 h+h \s>0$, it follows $\n h(\xi_{0})=0$, then \eqref{gradient ps} holds. 

\item[\textbf{Case 2}] $\xi_{0}\in \partial\C_{\theta}$. We choose an orthonormal frame $\{e_{i}\}_{\alpha=1}^{n-1}$ of $\partial\C_{\theta}$ such that $\{(e_{\alpha})_{\alpha=1}^{n-1}, e_{n}=\mu\}$ forms an orthonormal frame of $\C_{\theta}$.
  From \cite[Proposition 2.8]{MWW-AIM}, we know that $h_{n\alpha}(\xi_{0})=0$.
Using the maximal condition again, we have 
\begin{eqnarray}\label{han}
   0= \n_{e_{\alpha}} P=2\sum\limits_{i=1}^{n}h_{i}h_{i\alpha}+2hh_{\alpha},
\end{eqnarray}
which implies 
\begin{eqnarray*} 
    (\n_{e_{\alpha}}h)(\xi_{0})=0.
\end{eqnarray*}
Combining this with the Robin boundary value condition in \eqref{sup equ}, 
\begin{eqnarray*}
    P\leq h_n^2+h^2= (1+\cot^2\theta) h^2(\xi_0).
\end{eqnarray*}
\end{itemize}
In summary, we know that \eqref{gradient ps} holds and the proof is complete.
\end{proof}

Next, we establish the a priori $C^2$ estimate.  This approach is inspired by the work of Lions-Trudinger-Urbas  \cite{LTU} on the Monge-Amp\`ere equation with a Neumann boundary condition on the uniformly convex domain of Euclidean space, see also \cite{MQ} for the $k$-Hessian equation with a Neumann boundary condition. Following the similar strategy as in \cite{LTU, MQ} and also \cite{MWW-Lp, MWW-AIM}, we divide the proof into two main steps. First, we reduce the global $C^2$ estimate to a boundary double normal $C^2$ estimate, as stated in Lemma \ref{lem-c2 bry} below. Then we derive the boundary double normal $C^2$ estimate by constructing an appropriate test function, as detailed in Lemma \ref{lem-double} below.
In the proof, a precise characterization of the dependence between $\max\limits_{\C_{\theta}}|\nabla^{2}h|$ and $\|h\|_{C^{0}(\C_{\theta})}$ is crucial. This relationship represents a key improvement and refinement over our previous work in \cite[Section 3]{MWW-AIM} for the case $k=n$ of Eq. \eqref{sup equ}.

\begin{lem}\label{lem-c2 bry}
    Let $\theta\in (0, \frac{\pi}{2})$ and $h$ be a positive admissible solution to Eq. \eqref{sup equ}. Then we have 
    \begin{eqnarray}\label{uniform C2}
\max\limits_{\C_{\theta}}|\n^{2}h|\leq \max\limits_{\p \C_{\theta}}|\n^{2}h(\mu,\mu)|+2\|h\|_{C^{0}(\C_{\theta})}+C,
    \end{eqnarray}
    where the positive constant $C$ depends only on $n,k, \min\limits_{\C_{\theta}}f$,  and $\|f\|_{C^{2}(\C_{\theta})}$.
\end{lem}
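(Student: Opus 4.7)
The plan follows the reduction strategy of Lions--Trudinger--Urbas \cite{LTU} and Ma--Qiu \cite{MQ} for Neumann-type boundary problems: use a tangent-vector test quantity and the maximum principle to trade the global $C^2$ bound for a boundary double-normal bound. Admissibility gives $A := \nabla^2 h + h\sigma \in \Gamma_n$, so $A$ is pointwise positive-definite and
$$|\nabla^2 h|_{\mathrm{op}} \leq \lambda_{\max}(A) + \|h\|_{C^0(\C_\theta)}.$$
It therefore suffices to bound $M := \max_{\xi \in \C_\theta}\lambda_{\max}(A(\xi))$ by $\max_{\p\C_\theta}|h_{\mu\mu}| + \|h\|_{C^0} + C$. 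Introduce the test quantity $W(\xi,\eta) := A_{ij}(\xi)\eta^i\eta^j$ over unit tangent vectors $\eta \in T_\xi\C_\theta$, so $\max W = M$ is attained at some $(\xi_0,\eta_0)$. Near $\xi_0$, fix a local orthonormal frame diagonalizing $A(\xi_0)$ with $\eta_0 = e_n$, so $M = A_{nn}(\xi_0)$.

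In the interior case $\xi_0 \in \mathrm{int}(\C_\theta)$, I would apply the standard interior $\Gamma_n$-Hessian estimate. Rewriting \eqref{sup equ} as $F(A) = f^{-1/k}$ with $F := (\sigma_n/\sigma_{n-k})^{1/k}$, which is concave on $\Gamma_n$ by Proposition \ref{pro-2.3}, differentiate the equation twice in $e_n$ and use the critical-point conditions $\nabla A_{nn}(\xi_0) = 0$ and $\nabla^2 A_{nn}(\xi_0) \leq 0$, together with the sphere commutator formulas on $\SS^n$ (with $R_{ijkl} = \sigma_{ik}\sigma_{jl} - \sigma_{il}\sigma_{jk}$) to swap $A_{nn,ij}$ with $A_{ij,nn}$. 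Concavity of $F$ and positive-definiteness of $F^{ij}$ then yield $M \leq C(n,k,\min f, \|f\|_{C^2})$.

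If $\xi_0 \in \p\C_\theta$, take the adapted frame $\{e_1,\dots,e_{n-1},e_n=\mu\}$. The proof of Lemma \ref{C1}, via \cite[Proposition 2.8]{MWW-AIM}, recalls that $h_{\mu\alpha}(\xi_0) = 0$ for $\alpha < n$; thus $A(\xi_0)$ is block-diagonal with respect to the normal-tangential splitting, and $\eta_0$ is either $\pm\mu$ or a purely tangential unit vector. In the normal subcase, $M = A_{\mu\mu}(\xi_0) = h_{\mu\mu}(\xi_0) + h(\xi_0) \leq \max_{\p\C_\theta}|h_{\mu\mu}| + \|h\|_{C^0}$, delivering the claimed bound immediately.

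The main obstacle is the tangential subcase $\eta_0 = e_1$ (after diagonalizing the tangential block). Computing $\partial_\mu A_{11}(\xi_0) = h_{11\mu}(\xi_0) + h_\mu(\xi_0)\sigma_{11}$, commuting on $\SS^n$ to reduce $h_{11\mu}$ to $h_{\mu 11}$ plus bounded curvature terms, and tangentially differentiating the Robin condition $h_\mu = \cot\theta\,h$ twice (while accounting for the second fundamental form of $\p\C_\theta \subset \C_\theta$, which is a scalar multiple of $\cot\theta$) produces
$$\partial_\mu A_{11}(\xi_0) = \cot\theta\,A_{11}(\xi_0) + O(\|h\|_{C^1(\C_\theta)}).$$
A direct Hopf argument is inconclusive since both sides have the same sign. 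To circumvent this, I would work with the modified quantity $\widetilde W := e^{-\beta\phi}A_{ij}\eta^i\eta^j$, where $\phi$ is smooth on $\C_\theta$ with $\nabla_\mu\phi \equiv 1$ on $\p\C_\theta$, and $\beta > \cot\theta$ is fixed. After verifying that $\widetilde W$ obeys a subsolution inequality for the linearized operator $F^{ij}\partial_{ij}$ (using the interior computation above), Hopf's boundary-point lemma forces $\nabla_\mu\widetilde W(\xi_0) \geq 0$ at any tangential-subcase boundary maximum, giving $(\cot\theta - \beta)A_{11}(\xi_0) + O(\|h\|_{C^1}) \geq 0$, hence $A_{11}(\xi_0) \leq C(1 + \|h\|_{C^1})$; Lemma \ref{C1} converts this to $C(1 + \|h\|_{C^0})$. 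The strict inequality $\theta < \pi/2$ is essential here because $\cot\theta > 0$ is needed to choose $\beta > \cot\theta$. Combining the three cases gives \eqref{uniform C2}.
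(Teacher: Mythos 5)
Your test function, case split, interior concavity computation, and normal boundary subcase all match the paper's proof. The genuine gap is in the tangential boundary subcase, where you compute $\partial_\mu A_{11}(\xi_0) = \cot\theta\, A_{11}(\xi_0) + O(\|h\|_{C^1})$; this is incorrect, and the error sends you on an unnecessary detour. Carrying the second fundamental form of $\partial\C_\theta\subset\C_\theta$ through carefully, one first differentiates $h_\mu = \cot\theta\, h$ tangentially: writing $\nabla_{e_\alpha}\mu = \cot\theta\, e_\alpha$ (Weingarten map of the geodesic sphere of radius $\theta$), the $\cot\theta\, h_\alpha$ terms cancel and one gets $h_{\mu\alpha}\equiv 0$ on $\partial\C_\theta$. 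Differentiating $h_{\mu 1}\equiv 0$ once more along $e_1$, with $\nabla_{e_1} e_1 = -\cot\theta\, \mu$ at $\xi_0$ in a $\partial\C_\theta$-geodesic frame, yields
\begin{equation*}
0 = h_{1\mu 1} + \cot\theta\, h_{11} - \cot\theta\, h_{\mu\mu},
\end{equation*}
and after commuting and using $h_\mu=\cot\theta\, h$ once more, $\partial_\mu A_{11}(\xi_0) = \cot\theta\bigl(A_{\mu\mu}(\xi_0) - A_{11}(\xi_0)\bigr)$. The sign structure is opposite to what you wrote, and the difference is decisive: because $A_{11}(\xi_0)$ is the maximum eigenvalue, $A_{\mu\mu}-A_{11}\le 0$, yet the boundary maximum of $P$ forces $\partial_\mu A_{11}(\xi_0)\ge 0$. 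Since $\cot\theta>0$ for $\theta<\pi/2$, both inequalities can hold only if $A_{\mu\mu}(\xi_0)=A_{11}(\xi_0)$, so the maximum is also attained in the normal direction and the tangential subcase reduces to the normal one. The direct Hopf argument is therefore conclusive, not inconclusive.

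Because your intermediate formula is wrong, the barrier modification $\widetilde W = e^{-\beta\phi}A_{ij}\eta^i\eta^j$ you propose is both unnecessary and not reliably set up: to make it work you would have to verify that $F^{ij}\partial_{ij}\widetilde W\ge 0$ holds globally after introducing the multiplicative weight, which generates first-order terms you have not controlled, and the conclusion $A_{11}(\xi_0)\le C(1+\|h\|_{C^1})$ it would deliver does not have the form $\max|h_{\mu\mu}|+2\|h\|_{C^0}+C$ stated in the lemma (it proves something closer to the combined conclusion of this lemma and Lemma~\ref{lem-double}, but via a flawed route). Also, your stated reason that $\theta<\pi/2$ is "needed to choose $\beta>\cot\theta$" is an artifact of the incorrect formula; the actual role of $\theta<\pi/2$ is simply that $\cot\theta>0$ makes the sign comparison $A_{\mu\mu}\ge A_{11}$ follow from $\partial_\mu A_{11}\ge 0$.
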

\begin{proof}
Consider the function 
		\begin{eqnarray*}
			P(\xi, \Xi)\coloneqq\n^{2}h(\Xi, \Xi)+h(\xi),
		\end{eqnarray*}
		for $\xi\in \C_{\theta}$ and the unit vector  $\Xi\in T_{\xi  }\C_{\theta}$. 
		Suppose that $P$ attains its maximum at some   point $\xi_{0}\in \C_{\theta} $ and some unit vector $\Xi_{0}\in T_{\xi_{0}}\C_{\theta}$. 
		We divide the proof into two cases according to whether $\xi_0$ is an interior point or not.
		
		\
		
		{\bf Case  1.} $\xi_0\in \C_\theta \setminus\p \C_\theta$. In this case, we choose an orthonormal frame  $\{e_{i}\}_{i=1}^{n}$ around $\xi_{0}$, such that 
		$A_{ij}\coloneqq h_{ij}+h\s_{ij}$ is diagonal at $\xi_0$ and $\Xi_0=e_1$. 
		Denote \begin{eqnarray}\label{eq3}
			F(A)\coloneqq\left(\frac{\sigma_{n}(A)}{\sigma_{n-k}(A)}\right)^{\frac{1}{k}}=f^{-\frac{1}{k}}\coloneqq\widetilde f,
		\end{eqnarray}  and
		\begin{eqnarray*}
			F^{ij}\coloneqq\frac{\partial F}{\partial A_{ij}},
			\quad  F^{ij,kl}\coloneqq\frac{\partial^{2}F}{\partial A_{ij}\partial A_{kl}},\quad  \mathcal{F}\coloneqq \sum\limits_{i=1}^{n}F^{ii}.
		\end{eqnarray*}
	By the homogeneity of $F$, 
		\begin{eqnarray}\label{sum-1}
			F^{ij}h_{ij}=F^{ii}(A_{ii}-h)=\widetilde{f}-h \mathcal{F}.
		\end{eqnarray}
         By Proposition \ref{pro-2.3} (1), we have
\begin{eqnarray}\label{lower bound}
			\mathcal{F}=\left[\frac{\sigma_{n}(A)}{\sigma_{n-k}(A)}\right]^{\frac{1}{k}-1} \sum_{i=1}^n \frac{\sigma_{n-1}(A|i)\sigma_{k}(A)-\sigma_{n}(A)\sigma_{n-k-1}(A|i)}{\sigma_{n-k}^{2}(A)}\geq \binom{n}{k}^{-\frac{1}{k}},
		\end{eqnarray} 
 Taking the first and second covariant derivatives of Eq. \eqref{eq3} in the $e_1$ direction, it follows that  		\begin{eqnarray}\label{one deri}
			F^{ij}A_{ij 1}= \widetilde f _{1},
		\end{eqnarray}
		and 
		\begin{eqnarray}\label{tw0 deri}
			F^{ij}A_{ij11}=\widetilde f_{11}-F^{ij, kl}A_{ij1}A_{kl1}\geq \widetilde f_{11},
		\end{eqnarray}
        where the last inequality holds due to the concavity of $\left(\frac{\sigma_{n}(A)}{\sigma_{n-k}(A)}\right)^{\frac{1}{k}}$ (see Proposition \ref{pro-2.3}).
		
		From \eqref{one deri} and  \eqref{tw0 deri}, we have	\begin{eqnarray*} F^{ij}h_{ij1}=F^{ij}A_{ij1}-h_{1}\sum\limits_{i=1}^{n}F^{ii}		\geq -\max\limits_{\C_{\theta}}|\n \widetilde f|-\max\limits_{\C_{\theta}}|\n h|\cdot \mathcal{F}
  \end{eqnarray*}
		and
		\begin{eqnarray}\label{sum two deri}		F^{ij}h_{ij11}=F^{ij}A_{ij11}-h_{11}\sum\limits_{i=1}^{n}F^{ii}\geq -\max\limits_{\C_{\theta}}|\n^{2}\widetilde f|-h_{11}\cdot\mathcal{F}.	\end{eqnarray}
		For the standard metric on spherical cap $\C_\theta$,  we have the commutator formulae 
		\begin{eqnarray}\label{third comm}
			h_{kij}=h_{ijk}+h_k\d_{ij}-h_j\d_{ki},
		\end{eqnarray}
		and
		\begin{eqnarray}\label{forth comm}
			h_{klij}=h_{ijkl}+2h_{kl}\d_{ij}-2h_{ij}\d_{kl}+h_{li}\d_{kj}-h_{kj}\d_{il}.
		\end{eqnarray} 
		Inserting \eqref{sum-1} and \eqref{forth comm} into \eqref{sum two deri} yield,  
		\begin{eqnarray}\notag
			F^{ij}h_{11 ij}&=& F^{ij}\left(h_{ij11}+2h_{11}\delta_{ij} -2 h_{ij}+h_{1i}\d_{1j} -h_{1j}\d_{i1}\right)
			\\			&\notag=& F^{ij}h_{ij11}+2h_{11}\mathcal{F}-2F^{ij}h_{ij}\\
			&\geq& (h_{11}+2h)\cdot\mathcal{F}- 2\|\widetilde f\|_{C^{2}(\C_{\theta})}.\label{sum-4}
		\end{eqnarray}
 In view of \eqref{sum-1} and  \eqref{sum-4}, we obtain	\begin{eqnarray*}
			0&\geq &	 F^{ij}P_{ij}=
			F^{ij}h_{11ij}+F^{ij}h_{ij}
			\\&\geq &
			(h_{11}+h) \mathcal{F} -2\|\widetilde f\|_{C^{2}(\C_{\theta})}.
		\end{eqnarray*}
		Together with \eqref{lower bound}, we derive 
		\begin{eqnarray}\label{11}
			|h_{11}(\xi_0)|\leq C+\|h\|_{C^{0}(\C_{\theta})},
		\end{eqnarray}
 where the positive constant $C$ depends only on $n, k$, and $\|\widetilde f\|_{C^{2}(\C_{\theta})}$.
 
		\
		
		{\bf Case 2.}  $ \xi_{0} \in \partial \C_{\theta}$. In this case, we can follow the same argument as in \cite[Proof of Lemma 3.3, Case 2]{MWW-AIM} to obtain 
		\begin{eqnarray}\label{1n}
			\n^2	h(\Xi_0,\Xi_0)  \leq  |h_{\mu\mu}|(\xi_{0})+2 \|h\|_{C^0(\C_{\theta})}.
		\end{eqnarray}
		
        Finally, combining \eqref{11} and \eqref{1n}, we conclude that \eqref{uniform C2} holds.

	\end{proof}

Next, we establish the double normal derivative estimate of $h$ on the boundary, namely the term $|\n^2h(\mu,\mu)|$. To achieve this, we construct two barrier functions of $h_\mu$ near the region of the boundary, and the auxiliary functions here are motivated by \cite{LTU} and also \cite{MQ}.
To begin with, we introduce the function 
	\begin{eqnarray*}
		\zeta(\xi)\coloneqq e^{- d(\xi)}-1,
	\end{eqnarray*}where $d$ is defined as 
 \begin{eqnarray*}
     d(\xi)={\rm{dist}}(\xi, \partial\C_{\theta}),\quad \xi\in \C_{\theta}.
 \end{eqnarray*}
Note that $\zeta(\xi)$ is well-defined for all $\xi\in \C_\theta\setminus \{(1-\cos\theta) E_{n+1}\}$, and this function has been previously used by  Guan \cite[Lemma~3.1]{Gb99} and also in our recent work \cite[Section 3.3]{MWW-AIM}. It is easy to notice that $\zeta|_{\partial \C_{\theta}}=0$ and $\n \zeta|_{\partial \C_{\theta}}=\mu$.  Let $\lambda(\n^{2}\zeta)$ be the eigenvalue of the Hessian matrix of $\zeta$. Near the region of $\partial \C_{\theta}$, it satisfies 
	\begin{eqnarray*}
		e^{d}\lambda (\n^{2}\zeta)= \left(\cot\theta+O(d), \cdots, \cot\theta+O(d), 1 \right).
	\end{eqnarray*}
	In particular, there exists a small constant $\delta_{0}>0$ such that
	\begin{eqnarray}\label{hessian of zeta}
		(	\n^{2}_{ij}\zeta) \geq \frac{1}{2} \min\{\cot\theta,1\}  \s,\quad {\rm{in}}\quad \Omega_{\delta_{0}}.
	\end{eqnarray}
	where $$\Omega_{\delta_{0}}\coloneqq\{\xi\in \C_{\theta}: d(\xi)\leq \delta_{0}\}.$$

 \begin{lem}\label{lem-double}
     Let $\theta\in(0, \frac{\pi}{2})$ and $h$ be a positive admissible solution to Eq. \eqref{sup equ}. Then we have
     \begin{eqnarray*}         \max\limits_{\p\C_{\theta}}|\n^{2}h(\mu,\mu)|\leq C(\|h\|_{C^{0}(\C_{\theta})}+1),
     \end{eqnarray*}
     where the positive constant $C$ depends only on $n, k, \min\limits_{\C_{\theta}}f$ and $\|f\|_{C^{2}(\C_{\theta})}$.
 \end{lem}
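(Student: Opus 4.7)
The plan is to construct one-sided barriers in the tubular neighbourhood $\Omega_{\delta_0}$ of $\p\C_\theta$ that vanish on $\p\C_\theta$ and control the quantity
$$\varphi\coloneqq \n_{\tilde\mu}h-\cot\theta\, h,$$
where $\tilde\mu$ is a smooth extension of $\mu$ into $\Omega_{\delta_0}$, for instance the unit vector field $\n\zeta/|\n\zeta|$. The Robin condition in \eqref{sup equ} forces $\varphi\equiv 0$ on $\p\C_\theta$, so once $\varphi$ is squeezed between two barriers that also vanish on $\p\C_\theta$, a Hopf-type computation converts the pointwise bounds into a two-sided estimate of $\n_\mu\varphi = h_{\mu\mu}-\cot\theta\, h_\mu$ along $\p\C_\theta$, and the desired bound on $h_{\mu\mu}$ then follows from Lemma \ref{C1}.

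Let $L$ denote the linearized operator of \eqref{eq3}, so $Lu=F^{ij}(u_{ij}+u\delta_{ij})$. The two analytic ingredients I would assemble are: first, $L\zeta\geq c_0\mathcal F$ on $\Omega_{\delta_0}$ for some dimensional constant $c_0>0$, which follows from \eqref{hessian of zeta} together with $\mathcal F\geq \binom{n}{k}^{-1/k}$ from \eqref{lower bound}, after shrinking $\delta_0$ to absorb the negative contribution $\zeta\mathcal F$; second, an inequality of the form $|L\varphi|\leq C\mathcal F(1+\|h\|_{C^0(\C_\theta)})+C\|\tilde f\|_{C^1(\C_\theta)}$, obtained by differentiating \eqref{eq3} in the direction $\tilde\mu$, inserting the differentiated identity $F^{ij}h_{ijk}=\tilde f_k-h_k\mathcal F$, and tracking the error terms produced by the commutator rule \eqref{third comm} and by the covariant derivatives of the extension $\tilde\mu$.

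With these pieces in hand I would define
$$\Psi^{\pm}\coloneqq \pm\varphi + A_1\zeta + A_2(|\n h|^2+h^2)\zeta,$$
with $A_1=A_0(1+\|h\|_{C^0(\C_\theta)})$ and $A_0, A_2$ chosen sufficiently large. The quadratic term is included so that the non-negative contribution $F^{ij}h_{ki}h_{kj}\geq 0$ appearing in $L(|\n h|^2)$ (after a standard commutator identity) can absorb the cross terms from $L\varphi$ that involve the Hessian of $h$. On $\p\C_\theta$ all three ingredients vanish, so $\Psi^\pm=0$ there; on the interior portion $\{d=\delta_0\}$ the function $\zeta$ is bounded above by $e^{-\delta_0}-1<0$, and Lemma \ref{C1} together with the choice of $A_1$ forces $\Psi^\pm\leq 0$. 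The maximum principle then gives $\Psi^\pm\leq 0$ throughout $\Omega_{\delta_0}$, and computing $\n_\mu\Psi^\pm\geq 0$ on $\p\C_\theta$ using $\n_\mu\zeta=1$ and $\tilde\mu|_{\p\C_\theta}=\mu$ yields $|h_{\mu\mu}|\leq A_1+\cot\theta\, |h_\mu|+O(1)$ along the boundary, which is the desired bound.

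The main obstacle is the second of the two ingredients above: the error term $F^{ij}(\n_i\tilde\mu^k)h_{kj}$ produced by the non-trivial covariant derivatives of $\tilde\mu$ is borderline, bounded via Cauchy-Schwarz only by $\bigl(F^{ij}\n_i\tilde\mu^k\,\n_j\tilde\mu^\ell\delta_{k\ell}\bigr)^{1/2}\bigl(F^{ij}h_{ki}h_{kj}\bigr)^{1/2}$, so absorbing it requires the non-negative term $F^{ij}h_{ki}h_{kj}$ coming from the quadratic summand in $\Psi^\pm$ to be genuinely present and large enough. It is precisely at this point that the restriction $\cot\theta\geq 0$, namely $\theta\in(0,\pi/2)$, enters via \eqref{hessian of zeta}: it ensures $L\zeta$ has the correct sign and that the positive multiple of $\mathcal F$ in the lower bound is available to dominate the remaining curvature and gradient terms appearing in the computation.
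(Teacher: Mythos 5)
Your overall strategy mirrors the paper's: construct one‑sided barriers in the collar $\Omega_{\delta_0}$ from the distance‑type function $\zeta$ to control the Robin quantity $h_\mu-\cot\theta\,h$, use the maximum principle to pin down the sign of $\Psi^\pm$, and read off $h_{\mu\mu}$ from a Hopf computation at $\p\C_\theta$. The paper does exactly this with the barrier $Q=\langle\n h,\n\zeta\rangle-(B+\tfrac12 M)\zeta-\cot\theta\,h$ (and its counterpart $\overline Q$). However, there is a genuine gap in the extra quadratic summand you introduce, and it is in fact unnecessary.

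The quadratic term $A_2(|\n h|^2+h^2)\zeta$ does not help you absorb the cross term; it creates a new, uncontrolled bad term. Since $\zeta\leq 0$ in $\Omega_{\delta_0}$, when you compute $F^{ij}\big[(|\n h|^2+h^2)\zeta\big]_{ij}$ via the product rule, the well‑known favourable term $2F^{ij}h_{ki}h_{kj}\geq 0$ coming from $F^{ij}(|\n h|^2)_{ij}$ appears multiplied by $\zeta\leq 0$, i.e. you acquire $2\zeta A_2 F^{ij}h_{ki}h_{kj}\leq 0$. This quantity can be as negative as $\zeta A_2\sum_i F^{ii}A_{ii}^2$, which is of the order of $\lambda_n^2$ and is not controlled at the (hypothetical) interior maximum of $\Psi^+$; it therefore wrecks, rather than saves, the contradiction you need. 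Moreover, even if the interior step could be salvaged, the Hopf computation at $\p\C_\theta$ gives $\n_\mu\big[(|\n h|^2+h^2)\zeta\big]=|\n h|^2+h^2$ there, so your boundary inequality reads $|h_{\mu\mu}|\lesssim A_1+A_2(|\n h|^2+h^2)+\dots$, which is quadratic in $\|h\|_{C^0(\C_\theta)}$. That is strictly weaker than the linear bound $|h_{\mu\mu}|\leq C(1+\|h\|_{C^0(\C_\theta)})$ asserted by the lemma, and the linearity is precisely what is later needed to close the argument through Chou--Wang's lemma (Lemma \ref{chou-wang-lemma}) and Lemma \ref{lem C0}.

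The cross term you were worried about is in fact harmless and needs no Cauchy--Schwarz absorption. With the paper's unnormalized choice (replace $\tilde\mu$ by $\n\zeta$, i.e. use $\langle\n h,\n\zeta\rangle$ rather than $\n_{\tilde\mu}h$), the relevant term is $2F^{ij}h_{ki}\zeta_{kj}$. Writing $h_{ki}=A_{ki}-h\delta_{ki}$ gives $2F^{ij}A_{ki}\zeta_{kj}-2hF^{ij}\zeta_{ij}$. Because $\theta\in(0,\tfrac\pi2]$ makes $(\zeta_{kj})$ positive semidefinite by \eqref{hessian of zeta}, and $F^{ij}$, $A_{ki}$ are PSD and commute, the first piece is nonnegative; the second is bounded by $C\|h\|_{C^0(\C_\theta)}\mathcal F$. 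Thus the entire cross term is $\geq -C\|h\|_{C^0(\C_\theta)}\mathcal F$, and the paper's purely linear barrier $-B\zeta$ with $B\sim 1+\|h\|_{C^0(\C_\theta)}$ already dominates it, using $\mathcal F\geq\binom{n}{k}^{-1/k}$ from \eqref{lower bound}. So drop the quadratic term, keep a barrier of the form $\pm\big(\langle\n h,\n\zeta\rangle-\cot\theta\,h\big)\mp B\zeta$ (matching the sign of $B\zeta$ to the desired one‑sided inequality), and the argument closes with the linear dependence you need.
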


\begin{proof}

	We consider an auxiliary function
		\begin{eqnarray*}
			Q(\xi)\coloneqq\<\n h, \n \zeta\>-\left(B+\frac{1}{2}M\right)\zeta(\xi)-\cot\theta h(\xi), \quad  \xi\in \Omega_{\delta_{0}},
		\end{eqnarray*}
		where	$B$ is a positive constant to be determined later.
		
		Assume that $Q $ attains its minimum value at $\xi_{0}\in \left(\Omega_{\delta_{0}}\setminus \partial \Omega_{\delta_{0}}\right)$, and choose an orthonormal frame  $\{e_{i}\}_{i=1}^{n}$ around $\xi_{0}$  such that  $(W_{ij})$ is diagonal at $\xi_{0}$. Using \eqref{third comm}, \eqref{hessian of zeta}, at $\xi_0$, we obtain
		\begin{eqnarray*}
			0&\leq & 	 F^{ij}Q_{ij} 
			\\&=&F^{ij}h_{kij}\zeta_{k}+F^{ij}h_{k}\zeta_{kij}+2F^{ij}h_{ki}\zeta_{kj}-\left(B+\frac{1}{2}M\right)F^{ii}\zeta_{ii}-\cot\theta F^{ii}h_{ii}
			\\&=&F^{ii}\left(A_{iik}-h_{i}\delta_{ki}
			\right)\zeta_{k}+2F^{ii}(A_{ii}-h)\zeta_{ii}-\left(B+\frac{1}{2}M\right)F^{ii}\zeta_{ii}\\
        &&+F^{ii}h_{k}\zeta_{kii}-\cot\theta F^{ii}h_{ii}\\
         &=&\zeta_{k} \widetilde f_{k}-F^{ii}h_{i}\zeta_{i}+2F^{ii}A_{ii}\zeta_{ii}-2hF^{ii}\zeta_{ii}+F^{ii}h_{k}\zeta_{kii}-\cot\theta F^{ii}h_{ii}.
			\\
   && -\left(B+\frac{1}{2}M\right)F^{ii}\zeta_{ii}\\
     &\leq  & C_{1}\left(h+|\n h|+1\right)(\mathcal{F}+1)-\frac{1}{2}\left(B+\frac{1}{2}M\right)\min\{\cot\theta,1\}\mathcal{F},
		\end{eqnarray*}
		where the constant $C_{1}$ depends on $n, k,  \min\limits_{\C_{\theta}}f$ and    $\|f\|_{C^1(\C_{\theta})}$.
		In view of \eqref{lower bound} and \eqref{gradient ps},
		we have
		\begin{eqnarray*} 
			C_{1}(h+|\n h|+1)\left(1+\mathcal{F}\right)-\frac{1}{2}\left(B+\frac{1}{2}M\right)\min\{\cot\theta,1\}\mathcal{F}<0,
		\end{eqnarray*} 
		if $B$ is chosen 
		 by   
		\begin{eqnarray}\label{chosen of A}
			B&=&
\frac{4C_{1} \left(\binom{n}{k}^{\frac{1}{k}}+1 \right)\left[ \left(1+(1+\cot^{2}\theta)^{\frac{1}{2}} \right)\|h\|_{C^{0}(\C_{\theta})}+1\right]}{\min\{\cot\theta,1\}} \notag \\
   &&+\frac{1}{1-e^{-\delta_{0}}}\max\limits_{   \C_{\theta}}(|\n h|+\cot\theta  h). 
		\end{eqnarray} 
		   This  contradicts  $F^{ij}Q_{ij}\geq 0$ at $\xi_0$.  Therefore, $\xi_0\in \p \O_{\d_0}$.
		
		If  $\xi_0\in \partial \C_{\theta}\cap \p \O_{\d_0}$, it is easy to see that $Q(\xi_0)=0$. 
		
		If $\xi_0\in \partial \Omega_{\delta_{0}}\setminus  \partial \C_{\theta} $, we have $d(\xi_0)=\delta_0$, and from \eqref{chosen of A},
		\begin{eqnarray*}
			Q(\xi)\geq -|\n h|+B(1-e^{-\delta_{0}})-\cot\theta h\geq 0.
		\end{eqnarray*}
		
		In conclusion, we deduce that 
		\begin{eqnarray*}
			Q(\xi)\geq 0,\quad{\rm{in}}\quad \Omega_{\delta_{0}}.
		\end{eqnarray*}  \qed
		
		\
		
		Now we are ready to obtain the double normal second derivative estimate of $h$. 
		Assume  $h_{\mu\mu}(\eta_{0})\coloneqq\sup\limits_{\partial \C_{\theta}}h_{\mu\mu}>0$ for some $\eta_0\in \p \C_\theta$. In view of \eqref{han}, \eqref{gradient ps} and $Q\equiv 0$ on $\p \C_\theta$, 
		\begin{eqnarray*}
			0&\geq &Q_{\mu}(\eta_{0})\\
			&\geq &(h_{k\mu}\zeta_{k}+h_{k}\zeta_{k\mu})-\left(B+\frac{1}{2}M\right)\zeta_{\mu}-\cot\theta h_{\mu}\\
			&=& h_{\mu\mu}(\eta_{0})-\left(B+\frac{1}{2}M\right)+h_{k}\zeta_{k\mu}-\cot^{2}\theta h,
		\end{eqnarray*}
		which yields  
		\begin{eqnarray}\label{sup estimate}
			\max\limits_{\partial \C_{\theta}}h_{\mu\mu}\leq C\left(\|h\|_{C^{0}(\C_{\theta})}+1\right)+\frac{1}{2}M,
		\end{eqnarray}
  where $C$ is a  constant depending on $n, k,  \min\limits_{\C_{\theta}}f$ and    $\|f\|_{C^1(\C_{\theta})}$.  
To derive the lower bound of $h_{\mu\mu}$ at $\p\C_\theta$, we consider an auxiliary function as
		\begin{eqnarray*}
			\ov{Q}(\xi)\coloneqq\<\n h, \n \zeta\>+\left(\ov{B}+\frac{1}{2}M\right)\zeta(\xi)-\cot\theta h, \quad \xi\in \Omega_{\delta_{0}},
		\end{eqnarray*}
		where 	$\ov{B}>0$ is a positive constant. Similarly as above, we get $$\ov{Q}(\xi)\leq 0 ~\text{ in }   \Omega_{\delta_{0}},$$ and further
		\begin{eqnarray}\label{inf estimate}
			\min\limits_{\partial \C_{\theta}}h_{\mu\mu}\geq -C\left(\|h\|_{C^{0}(\C_{\theta})}+1\right)-\frac{1}{2}M.
		\end{eqnarray}
Therefore, \eqref{sup estimate} and \eqref{inf estimate} together yield
		 \begin{eqnarray*}\max\limits_{ \p\C_{\theta}}|\n^{2}h(\mu,\mu)|\leq C(\|h\|_{C^{0}(\C_{\theta})}+1).
     \end{eqnarray*}

\end{proof}

As a direct consequence of  Lemma \ref{lem-c2 bry} and Lemma \ref{lem-double}, we obtain the following quantitative dependence relationship between $\max\limits_{\C_{\theta}}|\n^{2}h|$ and $\|h\|_{C^{0}(\C_{\theta})}$,  which plays an important role in our subsequent analysis.
\begin{thm}\label{thm C2}
    Let $\theta\in (0, \frac{\pi}{2})$ and $h$ be a positive admissible solution to Eq. \eqref{sup equ}. Then we have
    \begin{eqnarray*}
        \max\limits_{\C_{\theta}}|\n^{2}h|\leq C(1+\|h\|_{C^{0}(\C_{\theta})}),
    \end{eqnarray*}
    where the positive constant $C$ depends on $n, k, \min\limits_{\C_{\theta}}f$ and $\|f\|_{C^{2}(\C_{\theta})}$.  Moreover, there exist positive constants $c_0, C_0$ depending only on $n,k, \min\limits_{\C_\theta} f$ and $\|f\|_{C^2(\C_\theta)}$, such that 
    \begin{eqnarray}\label{principal-radii-estimate}
 \frac{c_0}{ \left(1+\|h\|_{C^0(\C_\theta)}\right)^{n-1}}\sigma  \leq   \n^2 h+h\sigma \leq C_0 \left(1+\|h\|_{C^0(\C_\theta)} \right)\sigma, ~~~ \text{ in } \C_\theta.
    \end{eqnarray}
\end{thm}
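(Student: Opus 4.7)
The plan is to derive Theorem \ref{thm C2} as a direct structural consequence of Lemma \ref{lem-c2 bry}, Lemma \ref{lem-double}, and the Hessian quotient equation itself. The first assertion follows immediately: Lemma \ref{lem-c2 bry} reduces the global bound on $|\nabla^2 h|$ to the double normal second derivative on $\partial\C_\theta$, and Lemma \ref{lem-double} controls this boundary quantity linearly in $\|h\|_{C^0(\C_\theta)}$. Combining the two yields $\max_{\C_\theta}|\nabla^2 h| \leq C(1+\|h\|_{C^0(\C_\theta)})$.

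For the matrix inequality \eqref{principal-radii-estimate}, the upper bound is routine: every eigenvalue of $A=\nabla^2 h + h\sigma$ is dominated by $|\nabla^2 h|+h$, so $A \leq C_0(1+\|h\|_{C^0(\C_\theta)})\sigma$ follows at once from the first assertion.

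The lower bound is where the structure of Eq. \eqref{sup equ} enters. Order the eigenvalues of $A$ as $\lambda_1 \leq \lambda_2 \leq \cdots \leq \lambda_n$; by admissibility, $\lambda(A)\in \Gamma_n$, so all $\lambda_i>0$. The expansion of $\sigma_{n-k}(A)$ contains the single term $\lambda_{k+1}\lambda_{k+2}\cdots\lambda_n$ (the product of the $n-k$ largest eigenvalues), hence
\begin{equation*}
\sigma_{n-k}(A) \geq \lambda_{k+1}\cdots\lambda_n.
\end{equation*}
Substituting into the equation $\sigma_n(A) = f^{-1}\sigma_{n-k}(A)$ gives
\begin{equation*}
\lambda_1\lambda_2\cdots\lambda_n \;=\; f^{-1}\,\sigma_{n-k}(A) \;\geq\; \tfrac{1}{\max_{\C_\theta} f}\,\lambda_{k+1}\cdots\lambda_n,
\end{equation*}
so $\lambda_1\lambda_2\cdots\lambda_k \geq (\max_{\C_\theta}f)^{-1} =: c > 0$. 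Applying the already-established upper bound $\lambda_j \leq C(1+\|h\|_{C^0(\C_\theta)})$ to $j=2,\ldots,k$ and solving for $\lambda_1$ yields
\begin{equation*}
\lambda_1 \;\geq\; \frac{c}{C^{k-1}\bigl(1+\|h\|_{C^0(\C_\theta)}\bigr)^{k-1}} \;\geq\; \frac{c_0}{\bigl(1+\|h\|_{C^0(\C_\theta)}\bigr)^{n-1}},
\end{equation*}
where the last inequality uses $k-1\leq n-1$ together with $1+\|h\|_{C^0(\C_\theta)}\geq 1$. This gives the lower bound on $A$.

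In short, the genuine work has already been absorbed into Lemma \ref{lem-c2 bry} and Lemma \ref{lem-double}; the remaining step is a pointwise algebraic manipulation of the Hessian quotient that converts the bound on $|\nabla^2 h|$ into two-sided bounds on the principal radii $\lambda_i$. I do not anticipate any serious obstacle here, the only mild subtlety being to choose which single term of $\sigma_{n-k}$ to retain so that the uniform exponent $n-1$ (rather than a $k$-dependent one) suffices for all $1\leq k\leq n-1$ in a unified way.
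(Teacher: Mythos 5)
Your proposal is correct, and for the first assertion and the upper bound in \eqref{principal-radii-estimate} it coincides with the paper's proof: both simply chain Lemma \ref{lem-c2 bry} with Lemma \ref{lem-double}. The only genuine difference is in the lower bound. The paper invokes the Newton--Maclaurin inequality (Proposition \ref{pro-2.3}\,(1)) to write
\[
\sigma_n(A)=f^{-1}\sigma_{n-k}(A)\ \geq\ f^{-1}\binom{n}{k}\,\sigma_n(A)^{\frac{n-k}{n}},
\]
which gives $\sigma_n(A)\geq\bigl(\binom{n}{k}f^{-1}\bigr)^{n/k}$, and then divides by the remaining $n-1$ eigenvalues using the already-proved upper bound, producing the exponent $n-1$ directly. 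You avoid the Maclaurin inequality entirely: since $\lambda\in\Gamma_n$ all terms of $\sigma_{n-k}$ are positive, and retaining the single term $\lambda_{k+1}\cdots\lambda_n$ and cancelling it against $\sigma_n=\lambda_1\cdots\lambda_n$ gives $\lambda_1\cdots\lambda_k\geq f^{-1}\geq(\max_{\C_\theta}f)^{-1}$, after which only $k-1$ eigenvalues need to be bounded above. Your route is marginally more elementary (no cone inequality, just positivity of the symmetric sums) and in fact produces the sharper exponent $k-1$, which you then voluntarily relax to $n-1$ to match the displayed estimate. The dependence on $\max_{\C_\theta}f$ introduced by your bound is absorbed into $\|f\|_{C^2(\C_\theta)}$, exactly as in the paper's version, so the stated constants are unaffected.
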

\begin{proof}
    The upper bound in \eqref{principal-radii-estimate} follows directly from Lemma \ref{lem-c2 bry} and Lemma \ref{lem-double}. We only derive the lower bound. By Proposition \ref{pro-2.3} (1) and Eq. \eqref{sup equ},  
    \begin{eqnarray*}
        \s_n(\n^2h+h\s)=f^{-1} \s_{n-k}(\n^2 h+h \s) \geq f^{-1} \binom{n}{k} \s_n(\n^2 h+h\s)^{\frac{n-k}{n}},
    \end{eqnarray*}it yields
    \begin{eqnarray*}
        \s_n(\n^2 h+h\s) \geq  \left(\binom{n}{k} f^{-1}\right)^{\frac n k},
    \end{eqnarray*} together with the upper bound in \eqref{principal-radii-estimate}, then the lower bound of \eqref{principal-radii-estimate} follows.
\end{proof}

Now we proceed to prove Theorem \ref{thm est}.
\begin{proof}[\textbf{Proof of Theorem \ref{thm est}}]
From Lemma \ref{chou-wang-lemma}, Lemma \ref{lem C0} and  Theorem \ref{thm C2}, we have
     \begin{eqnarray*}
          \rho_{+}(\widehat{\S}, \theta)^{2}\leq C\rho_{-}(\widehat{\S}, \theta)\max\limits_{\C_{\theta}}\lambda_{n}\leq C\left(1+\rho_{+}(\widehat{\S},\theta)\right),
     \end{eqnarray*}
     and hence $$\rho_{+}(\widehat{\S}, \theta)\leq C.$$ Together with  Lemma \ref{C1}
 and Theorem \ref{thm C2}, we conclude that
 \begin{eqnarray*} 
     \|h\|_{C^{2}(\C_{\theta})}\leq C.
 \end{eqnarray*}
By applying the theory of fully nonlinear second-order uniformly elliptic equations with oblique derivative boundary conditions (cf. \cite[Theorem 1.1]{LT}), then we obtain $C^{2,\gamma}$ $(\gamma \in (0,1))$ estimate of $h$, along with the higher-order estimates as in \eqref{global C2}. This completes the proof.
 \end{proof}

\section{Proof of Theorems \ref{main-thm} and \ref{thm-1.2}}\label{sec4}
In this section, we use a degree theory argument as in \cite{LYY, LLN} to complete the proof of Theorem \ref{main-thm}, following the approach of \cite{GG02, GMZ, GZ}. To compute the degree, we require a uniqueness result for capillary convex hypersurfaces $\Sigma\subset\ol{\RR^{n+1}_+}$ with constant $k$-th Weingarten curvature and constant contact angle $\theta\in(0, \pi)$ along $\p\S\subset\p \RR^{n+1}_+$, which was recently established in \cite[Corollary 1.2]{JWXZ}. In turn, this implies that the function $$ \binom{n}{k}^{\frac1 k}\ell \eqqcolon \wt\ell$$ is the unique solution to Eq. \eqref{sup equ} if $f\equiv 1$. We now prove Theorem \ref{main-thm} as follows.

\begin{proof}[\textbf{Proof of Theorem  \ref{main-thm}}]
For $\gamma\in (0,1)$ and integer $l\geq 0$, set
\begin{eqnarray*}
    \mathcal{A}^{l,\gamma}\coloneqq \left\{h\in C^{l,\gamma}(\C_{
    \theta
    }): h(\xi)=h(\widehat{\xi}),~\forall\xi\in \C_{\theta}  \right\}.
\end{eqnarray*}
Now define a subset of $\mathcal{A}^{l,\gamma}$ 
\begin{eqnarray*}
    \mathcal{B}\coloneqq  \left\{h\in \mathcal{A}^{l+2,\gamma}: \n^{2}h+h\sigma >0,~\n_{\mu}h=\cot\theta h~{\rm{on}}~\partial\C_{\theta}~{\rm{and}}~\|h\|_{C^{l+2,\gamma}(\C_{\theta})}<C \right\},
\end{eqnarray*}
where $C$ is a uniform positive constant to be determined later.

Consider the map $G(\cdot, t): \mathcal{A}^{l+2,\gamma}\rightarrow\mathcal{A}^{l, \gamma}$ given by
\begin{eqnarray*}
G(h, t)=\frac{\sigma_{n}(\n^{2}h+h\sigma)}{\sigma_{n-k}(\n^{2}h+h\sigma)}-f^{t},
\end{eqnarray*}
where 
$$f^{t}\coloneqq (1-t)\frac{1}{\binom{n}{k}}+t f^{-1}.$$
We next \textbf{Claim} that if $C$ is sufficiently large, then $G(h,t)=0$ has no solution on the boundary of $\mathcal{B}$, i.e., $\partial \mathcal{B}$.  In fact, we argue by contradiction, if not, there exists a function $h\in \partial \mathcal{B}$, i.e., $\n^{2}h+h\sigma\geq 0$ or $\|h\|_{C^{4,\gamma}(\C_{\theta})}=C$, satisfying
$$
\frac{\sigma_{n}(A)}{\sigma_{n-k}(A)}= f^{t}.
$$
By Theorem \ref{thm est} and $f^{t}$ is a positive function, this reaches a contradiction. Thus \textbf{Claim} is true. Consequently, by \cite[Proposition~2.2]{LYY}, we have
\begin{eqnarray}\label{deg-1}
    \text{deg}\left(G(\cdot, 0), \mathcal{B}, 0\right)=\text{deg}(G(\cdot, 1), \mathcal{B}, 0).
\end{eqnarray}
From \cite[Corollary~1.2]{JWXZ}, we have that  $h_{0}=\ell$ is the unique capillary even solution to $G(h, 0)=0$ and  the linearized operator of $G$ at $h_{0}=\ell$ is
  \begin{eqnarray}\label{L-operator}
      \mathcal{L}\varphi=a_{0} (\Delta\varphi+n \varphi),\quad  \text{for~any}~\varphi\in \mathcal{A}^{l+2,\gamma},
  \end{eqnarray}
where $a_{0}$ is a positive constant.   If $\varphi\in \text{Ker}(\mathcal{L})\cap \mathcal{A}^{l+2,\gamma}$, which implies $\Delta\varphi+n\varphi=0$, then $\varphi= \sum \limits_{\alpha=1}^{n}a_{\alpha}\<\xi, E_{\alpha}\>$ for some constants $\{a_{\alpha}\}_{\alpha=1}^{n}\subset \RR$ (or see \cite[Lemma 4.3]{MWW-AIM}). On the other hand, $\varphi$ is a capillary even function, it implies
\begin{eqnarray*}
    \int_{\C_{\theta}}\varphi \<\xi, E_{\alpha}\> dA_\sigma =0,\quad \text{for}~1\leq \alpha\leq n,
\end{eqnarray*}
then we conclude that $\varphi=0$  and  in turn the operator 
 $\mathcal{L}$ is invertible. It follows that (cf. \cite[Proposition~2.3, Proposition~2.4]{LYY} and \cite[Theorem~1.1]{LLN}), 
\begin{eqnarray}\label{deg-2}
    \text{deg}(G(\cdot, 0), \mathcal{B}, 0)=\text{deg}(\mathcal{L}, \mathcal{B}, 0)=\pm 1.
\end{eqnarray}
Combining \eqref{deg-1} and \eqref{deg-2}, we conclude that $\text{deg}(G(\cdot, 1), \mathcal{B}, 0)\neq 0$,  which implies Eq. $F(\cdot, 1)=0$, i.e., Eq. \eqref{sup equ} has at least one solution in $\mathcal{B}$ and  we complete the proof of Theorem \ref{main-thm}.

\end{proof}

Next, we proceed to prove Theorem \ref{thm-1.2}. In fact, this result is a direct consequence of  Theorem \ref{thm est} and the inverse function theorem. We start with the following result. 

\begin{lem}
    Let $f$ be a positive smooth, capillary even function on $\C_{\theta}$. Suppose there exists a small positive  constant $\varepsilon_{0}$ such that $\|f^{-1}-1\|_{C^{\alpha}(\C_{\theta})}\leq \varepsilon_{0}$ and two capillary even functions $h_{1}, h_{2}$ satisfy Eq. \eqref{sup equ}. If $\|h_{i}-\wt\ell\|_{C^{0}(\C_{\theta})}\leq \varepsilon_{0}$ for $i=1,2$, then $h_{1}=h_{2}$. 
\end{lem}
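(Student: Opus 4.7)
The idea is to phrase the statement as a local uniqueness result obtained from the inverse function theorem, applied on the closed subspace of capillary even functions satisfying the Robin boundary condition — the same framework that appears in the proof of Theorem \ref{main-thm}. Fix $\a\in(0,1)$ and define the Banach spaces
\begin{eqnarray*}
X&\coloneqq &\left\{h\in C^{2,\a}(\C_{\theta}):\ h(\xi)=h(\wh\xi)\ \text{and}\ \n_{\mu}h=\cot\theta\, h\ \text{on}\ \p\C_{\theta} \right\},\\
Y&\coloneqq &\left\{g\in C^{\a}(\C_{\theta}):\ g(\xi)=g(\wh\xi)\right\},
\end{eqnarray*}
and consider the $C^{1}$-smooth map
\begin{eqnarray*}
G(h)\coloneqq \frac{\s_{n}(\n^{2}h+h\s)}{\s_{n-k}(\n^{2}h+h\s)},
\end{eqnarray*}
defined on the open subset $U\subset X$ of admissible functions, with image in $Y$. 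A direct computation using $\n^{2}\wt\ell+\wt\ell\,\s=\binom{n}{k}^{1/k}\s$ yields $G(\wt\ell)=1$.

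Next, compute the Fréchet derivative of $G$ at $\wt\ell$. By the same calculation recorded in \eqref{L-operator}, $DG(\wt\ell)\v=\mathcal{L}\v=a_{0}(\De\v+n\v)$ for some positive constant $a_{0}$, now viewed as a bounded operator from $X$ to $Y$. The key claim is that $\mathcal{L}:X\to Y$ is an isomorphism. Indeed, the proof of Theorem \ref{main-thm} already shows (via \cite[Lemma~4.3]{MWW-AIM}) that the kernel of $\De+n$ under the Robin boundary condition, considered on the full Hölder space, is exactly $\mathrm{span}\{\<\xi,E_{\a}\>\}_{\a=1}^{n}$; every such function is odd under $\xi\mapsto\wh\xi$, so $\ker(\mathcal{L})\cap X=\{0\}$. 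Since $\De+n$ with the Robin boundary condition is formally self-adjoint in $L^{2}(\C_{\theta})$, the $L^{2}$-cokernel is spanned by the same odd coordinate functions and therefore is trivial on $Y$. Combined with standard elliptic Fredholm theory for oblique boundary problems, this yields invertibility of $\mathcal{L}:X\to Y$. The inverse function theorem then produces $\d>0$ and open neighborhoods $V_{\d}\subset X$ of $\wt\ell$ and $W_{\d}\subset Y$ of $1$ such that $G:V_{\d}\to W_{\d}$ is a diffeomorphism; in particular, the equation $G(h)=f^{-1}$ has at most one solution in $V_{\d}$ whenever $f^{-1}\in W_{\d}$.

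Finally, upgrade the hypothesis $\|h_{i}-\wt\ell\|_{C^{0}}\leq \varepsilon_{0}$ to $C^{2,\a}$-closeness in order to place $h_{1},h_{2}$ inside $V_{\d}$. The bound $\|f^{-1}-1\|_{C^{\a}}\leq \varepsilon_{0}$ directly gives $f^{-1}\in W_{\d}$ for small $\varepsilon_{0}$ and controls both $\min_{\C_{\theta}}f$ and $\|f\|_{C^{3}}$ uniformly, so Theorem \ref{thm est} applied to each $h_{i}$ yields $\|h_{i}\|_{C^{4,\g}(\C_{\theta})}\leq M$ for an absolute constant $M$. Standard Hölder interpolation between $C^{0}$ and $C^{4,\g}$ then gives
\begin{eqnarray*}
\|h_{i}-\wt\ell\|_{C^{2,\a}(\C_{\theta})}\leq C\,\|h_{i}-\wt\ell\|_{C^{0}(\C_{\theta})}^{\tau}\,\|h_{i}-\wt\ell\|_{C^{4,\g}(\C_{\theta})}^{1-\tau}\leq C'\varepsilon_{0}^{\tau}
\end{eqnarray*}
for some $\tau\in(0,1)$, so $h_{i}\in V_{\d}$ once $\varepsilon_{0}$ is sufficiently small. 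The injectivity of $G$ on $V_{\d}$ forces $h_{1}=h_{2}$. The main conceptual step is identifying the correct function spaces in which $\mathcal{L}$ becomes invertible — namely, restricting to the capillary even subspace, which kills both the kernel and the cokernel coming from horizontal translations; the rest is a routine combination of the inverse function theorem with the a priori estimates developed in Section \ref{sec3}.
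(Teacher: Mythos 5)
Your proposal follows essentially the same skeleton as the paper's proof: establish that the linearized operator $\mathcal{L}=a_0(\Delta+n)$ is invertible on the capillary even subspace (killing the translation modes $\langle\xi,E_\alpha\rangle$), upgrade the $C^0$-closeness hypothesis to $C^{2,\alpha}$-closeness, and then conclude by the inverse function theorem. The two arguments differ only in how they perform the upgrade step. The paper writes $f^{-1}-1$ as a linear uniformly elliptic operator with oblique boundary condition applied to $h_1-\widetilde\ell$ (via a path integral of $\widehat F$), and invokes a Schauder-type estimate to get the \emph{linear} bound $\|h_1-\widetilde\ell\|_{C^{2,\alpha}}\leq C(\|h_1-\widetilde\ell\|_{C^0}+\|f^{-1}-1\|_{C^\alpha})\leq 2C\varepsilon_0$. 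You instead feed the a priori bound $\|h_i\|_{C^{4,\gamma}}\leq M$ from Theorem \ref{thm est} into a Hölder interpolation between $C^0$ and $C^{4,\gamma}$, getting the \emph{sublinear} bound $\leq C'\varepsilon_0^\tau$. Both routes are valid and place $h_i$ in the neighborhood of $\widetilde\ell$ where the inverse function theorem applies; your version is slightly more self-contained (no additional Schauder estimate for the linearized equation), the paper's gives a sharper modulus.

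One claim is incorrect as written: you assert that $\|f^{-1}-1\|_{C^\alpha(\C_\theta)}\leq\varepsilon_0$ ``controls both $\min_{\C_\theta}f$ and $\|f\|_{C^3}$ uniformly.'' It controls $\min f$, but it certainly does not control $\|f\|_{C^3}$ — one can make $f$ arbitrarily close to $1$ in $C^\alpha$ while $\|f\|_{C^3}$ blows up, so the constant $M$ in $\|h_i\|_{C^{4,\gamma}}\leq M$ from Theorem \ref{thm est} is \emph{not} an absolute constant. This does not sink your argument, because the statement of the lemma only asserts existence of some $\varepsilon_0$ (which may depend on $f$), and the paper's proof has the same implicit dependence on $\|f\|_{C^3}$ when it invokes Theorem \ref{thm est} to bound the coefficients $a^{ij}$. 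But you should delete the false claim and state instead that $\varepsilon_0$, $M$, and hence $\delta$ are allowed to depend on $\|f\|_{C^3(\C_\theta)}$; this is consistent with how the lemma is subsequently used.
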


\begin{proof}
  Direct calculations yield
    \begin{eqnarray}
        f^{-1}-1&=&\widehat F(\n^{2}h_{1}+h_{1}\sigma)-\widehat F(\n^{2}\wt\ell+\wt\ell \sigma)\notag \\
        &=&\int_{0}^{1}\frac{d}{dt}\widehat F(\n^{2}((1-t)\wt\ell+th_{1})+((1-t)\wt\ell+th_{1})\sigma)dt\notag \\
        &\coloneqq&\sum\limits_{i,j=1}^{n}a^{ij}\left[(h_{1}-\wt\ell)_{ij}+(h_{1}-\wt\ell)\sigma_{ij}\right], \label{elliptic equ}
    \end{eqnarray}
    where $\widehat F(A)=\frac{\sigma_{n}(A)}{\sigma_{n-k}(A)}, \widehat F^{ij}=\frac{\partial \widehat F(A)}{\partial A_{ij}}$ and $
    a^{ij}=\int_{0}^{1}\widehat F^{ij}(\n^{2}((1-t)\wt\ell+th_{1})+((1-t)\wt\ell+th_{1})\sigma)dt$. 

    By Theorem \ref{thm est}, there exists a positive constant $C$ such that $\frac{1}{C} I\leq \{a^{ij}\}\leq C I$, and hence Eq. \eqref{elliptic equ} is uniformly elliptic.
   The standard  nonlinear elliptic theory with Oblique boundary value condition  (cf. \cite[Theorem~1]{LT}) implies 
   \begin{eqnarray*}
       \|h_{1}-\wt\ell\|_{C^{2, \alpha}(\C_{\theta})}\leq C \left(\|h-\wt\ell\|_{C^{0}(\C_{\theta})}+\|f^{-1}-1\|_{C^{\alpha}(\C_{\theta})} \right)\leq 2 C\varepsilon_{0}.
   \end{eqnarray*}
   Similarly to above, we have
   \begin{eqnarray*}
       \|h_{2}-\wt\ell\|_{C^{2, \alpha}(\C_{\theta})}\leq 2C\varepsilon_{0}.
   \end{eqnarray*}
   By the inverse function theorem and the linearized operator of Eq. \eqref{sup equ} is invertible at $h=\wt\ell$ (up to a positive constant scaling of $\ell$), thus it follows that $h_{1}=h_{2}$ if $\varepsilon_{0}$ is sufficiently small.   
\end{proof}

\begin{proof}[\textbf{Proof of Theorem \ref{thm-1.2}}]

Suppose that Theorem \ref{thm-1.2} is not true. Then there exist two sequence of functions $\{f_{i}\}_{i\in \NN}, \{h_{i}\}_{i\in \NN}$, such that 
\begin{eqnarray*} \left\{
\begin{array}{rcll}\vspace{2mm}\displaystyle
	 \frac{\sigma_{n}(\n^{2}h_i+h_i\sigma)}{\sigma_{n-k}(\n^{2}h_i+h_i\sigma)} &= &f_i^{-1} & \text{ in } \C_\theta,\\ 
	\n_\mu h_i&=& \cot\theta h_i & \text{ on } \p \C_\theta,\end{array} \right.
\end{eqnarray*} 
and if $i\rightarrow \infty$, satisfying 
\begin{eqnarray*}
    \|f_{i}^{-1}-1\|_{C^{\alpha}(\C_{\theta})}\leq \frac{1}{i},\quad {\rm{and}}\quad \|h_{i}-\wt\ell\|_{C^{0}(\C_{\theta})}\geq \varepsilon_{0}.
\end{eqnarray*}
From Theorem \ref{thm est}, after passing to a subsequence, $h_{i}$ converges to some function $h_\infty\in C^{2}(\C_{\theta})$ and $h_\infty$ satisfies
\begin{eqnarray}\label{sup equ-1} \left\{
\begin{array}{rcll}\vspace{2mm}\displaystyle
	 \frac{\sigma_{n}(\n^{2}h_\infty+h_\infty\sigma)}{\sigma_{n-k}(\n^{2}h_\infty+h_\infty\sigma)} &= &1 & \text{ in } \C_\theta,\\ 
	\n_\mu h_\infty&=& \cot\theta h_\infty & \text{ on } \p \C_\theta,\end{array} \right.
\end{eqnarray} 
and 
\begin{eqnarray*}
    \|h_\infty-\wt\ell\|_{C^{0}(\C_{\theta})}\geq \varepsilon_{0}.
\end{eqnarray*}
This contradicts the fact that $\wt\ell$ is the unique strictly convex solution to Eq. \eqref{sup equ-1} with $f=1$ (cf. \cite[Corollary~1.2]{JWXZ}). Hence, we complete the proof.  
\end{proof}

\section{Counterexample}\label{sec5}
In this section, we construct some counterexamples to show the condition \eqref{nece suff cond} is not a necessary condition for the solvability of Eq. \eqref{cur equ}. We follow the strategy presented in \cite[Section~4]{GG02} with only minor modifications.

Let $v\in C^{\infty}(\C_{\theta})$ and $\n_{\mu}v=\cot\theta v$ on $\partial \C_{\theta}$. For small $t>0$, the function $$h_{t}\coloneqq \ell+tv,$$ is the support function of some smooth, strictly convex capillary hypersurface in $\ol{\RR^{n+1}_+}$ (cf. \cite[Proposition 2.6]{MWWX}), and further
\begin{eqnarray*}
    H_{n}(\n^{2}h_{t}+h_{t}\sigma)=\sum\limits_{i=0}^{n}\frac{n!}{i!(n-i)!}H_{i}t^{i},
\end{eqnarray*}
where  $H_{i}\coloneqq\frac{i!(n-i)!}{n!}\sigma_{i}(\n^{2}v+v\sigma)$. The direct computation implies
\begin{eqnarray}\label{zero-1}
    \int_{\C_{\theta}}H_{i}(\n^{2}v+v\sigma) \<\xi, E_{\alpha}\>dA_\sigma=0,\quad {\rm{for~all}}\quad 1\leq i, \alpha \leq n. 
\end{eqnarray} (See also \cite[Proposition~2.6]{MWW-CM}).
For a  fixed integer $k ~(1\leq k\leq n)$,  the following expansion formula holds
\begin{eqnarray}\label{expansion}
    \frac{H_{n}(\n^{2}h_{t}+h_{t}\sigma)}{H_{n-k}(\n^{2}h_{t}+h_{t}\sigma)}=1+a_{1}t+a_{2}t^{2}+O(t^{3}),
\end{eqnarray}
where 
\begin{eqnarray*}
    a_{1}\coloneqq (n-k)H_{1},\quad \quad
    a_{2}\coloneqq \frac{n-k}{2}\left[(n+k-1)H_{2}-2kH_{1}^{2}\right].
\end{eqnarray*}

We conclude this paper by completing the proof of Theorem \ref{thm-example}.


\begin{proof}[\textbf{Proof of Theorem  \ref{thm-example}}]
    Using the spherical coordinates on $\C_{\theta}$, we have 
    \begin{eqnarray*}
        &&\xi_{n+1}=\cos\theta_{1}-\cos\theta,\\
        &&\xi_{j}=\sin\theta_{1}\cdots \sin\theta_{n-j+1} \cos\theta_{n-j+2},\quad 2\leq j\leq n, \\
        &&\xi_{1}=\sin\theta_{1}\sin\theta_{2}\cdots \sin\theta_{n}, \\    
        &&d A_\sigma= \sin^{n-1}\theta_{1}\sin^{n-2}\theta_{2}\cdots \sin\theta_{n-1}d\theta_{n}\cdots d\theta_{1},
    \end{eqnarray*}
where $\theta_{1}\in [0,  \theta], \theta_{n}\in [0, 2\pi]$ and $0\leq \theta_{j}\leq \pi$ for all $2\leq i\leq n-1$.
We modify the function $g$ chosen  in \cite[Proposition~4.1]{GG02} as follows, 
\begin{eqnarray*} 
    g(\xi)=\eta(\cos^{2}\theta_{2})\cdots \eta(\cos^{2}\theta_{n-1})(\cos 2\theta_{n}+\sin3 \theta_{n}),
\end{eqnarray*}
where $\eta(t)$ is a smooth cut-off function satisfying $0\leq \eta\leq 1$ and $\eta=1$ if $|t|<\frac{1}{2}$, and $\eta=0$ if $|t|>\frac{3}{4}$. It is easy to check that 
\begin{eqnarray}\label{per-Ker}
    \int_{\C_{\theta}}\<\xi, E_{\alpha}\> g(\xi)dA_\sigma=0, \quad {\rm{for~all}}~1\leq \alpha \leq n,
\end{eqnarray}
and 
\begin{eqnarray}\label{non-zero}
    \int_{\C_{\theta}}\<\xi, E_{1}\>g^{2}(\xi)dA_\sigma\neq 0.
\end{eqnarray}
By \eqref{per-Ker}, we know that $g\in \left({\rm{Ker}(\mathcal{L})}\right)^{\perp}$, where the elliptic operator $\mathcal{L}$ is given by \eqref{L-operator}. By the Fredholm alternative theorem implies that there exists $v\in C^{\infty}(\C_{\theta})$ satisfying
\begin{eqnarray*} \left\{
\begin{array}{llll}
	H_{1}=\frac{1}{n}(\Delta v+nv)&= &g & \text{ in } \C_\theta,\\ 
	\n_\mu v&=& \cot\theta v & \text{ on } \p \C_\theta.\end{array} \right.
\end{eqnarray*}
Then, for all sufficiently small $t>0$, we have $\n^{2}h_{t}+h_{t}\sigma>0$. Combining \eqref{zero-1}, \eqref{expansion}, \eqref{per-Ker} and \eqref{non-zero},  we conclude that 
\begin{eqnarray*} 
    \int_{\C_{\theta}}\frac{H_{n}(\n^{2}h_{t}+h_{t}\sigma)}{H_{n-k}(\n^{2}h_{t}+h_{t}\sigma)}\left\<\xi, E_{1}\right\>dA_{\sigma}=-k(n-k)t^{2}\int_{\C_{\theta}}\<\xi, E_{1}\>g^{2}dA_{\sigma}+O(t^{3})\neq 0.
    \end{eqnarray*}
This completes the proof.

\end{proof}

\

\subsection*{Acknowledgment}  X.M. was supported by  the National Key R$\&$D Program of China 2020YFA0712800  and  the Postdoctoral Fellowship Program of CPSF under Grant Number GZC20240052.  L.W. was partially supported by CRM De Giorgi of Scuola Normale Superiore and PRIN Project 2022E9CF89 of University of Pisa.

\ 

\bigskip



\printbibliography 

\end{document}